\newcommand*\linenomathpatch[1]{%
	\expandafter\pretocmd\csname #1\endcsname {\linenomath}{}{}%
	\expandafter\pretocmd\csname #1*\endcsname{\linenomath}{}{}%
	\expandafter\apptocmd\csname end#1\endcsname {\endlinenomath}{}{}%
	\expandafter\apptocmd\csname end#1*\endcsname{\endlinenomath}{}{}%
}
\newcommand*\linenomathpatchAMS[1]{%
	\expandafter\pretocmd\csname #1\endcsname {\linenomathAMS}{}{}%
	\expandafter\pretocmd\csname #1*\endcsname{\linenomathAMS}{}{}%
	\expandafter\apptocmd\csname end#1\endcsname {\endlinenomath}{}{}%
	\expandafter\apptocmd\csname end#1*\endcsname{\endlinenomath}{}{}%
}
\let\linenomathAMS\linenomathWithnumbers
\patchcmd\linenomathAMS{\advance\postdisplaypenalty\linenopenalty}{}{}{}
\let\linenomathAMS\linenomathNonumbers
\theoremstyle{plain}
\newtheorem{theorem}{Theorem}[section]
\crefname{theorem}{Theorem}{Theorems}
\crefname{proposition}{Proposition}{Propositions}
\crefname{corollary}{Corollary}{Corollaries}
\newtheorem{lemma}[theorem]{Lemma}
\crefname{lemma}{Lemma}{Lemmas}
\crefname{conjecture}{Conjecture}{Conjectures}
\crefname{observation}{Observation}{Observations}
\newtheorem*{claim}{Claim}
\crefname{claim}{Claim}{Claims}
\theoremstyle{definition}
\newtheorem{definition}[theorem]{Definition}
\crefname{definition}{Definition}{Definitions}
\crefname{construction}{Construction}{Constructions}
\crefname{question}{Question}{Questions}
\crefname{section}{Section}{Sections}
\crefname{figure}{Figure}{Figures}
\numberwithin{equation}{section}
\crefname{enumi}{}{}
\definecolor{VividOrange}{HTML}{F15918}
\definecolor{NeonBlue}{HTML}{1F51FF}
\DeclarePairedDelimiter\abs{\lvert}{\rvert}
\DeclarePairedDelimiter\norm{\lVert}{\rVert}
\renewcommand{\subset}{\subseteq}
\def\leq{\leqslant}
\def\geq{\geqslant}
\newcommand{\II}{\mathcal{I}}
\newcommand{\NN}{\mathbb{N}}
\newcommand{\ZZ}{\mathbb{Z}}
\newcommand{\PM}{\mathcal{P}}
\newcommand{\CM}{\mathcal{C}}
\newcommand{\Reg}[1]{\ensuremath{\mathrm{Reg}({#1})}}
\newcommand{\ExpM}[1]{\ensuremath{\mathrm{Exp_1}({#1})}}
\newcommand{\ExpL}[1]{\ensuremath{\mathrm{Exp_2}({#1})}}
\newcommand{\Def}[1]{\ensuremath{\mathrm{Def}({#1})}}
\newenvironment{proofclaim}[1][Proof of the claim]{\begin{proof}[#1]}{\end{proof}}
\title[]{Asymptotically Enumerating Independent Sets \\ in Regular $k$-Partite $k$-Uniform Hypergraphs}
\date{\today}
\author[Arras, Garbe, and Joos]{Patrick Arras, Frederik Garbe, and Felix Joos}
\thanks{The research leading to these results was partially supported by the Deutsche \mbox{Forschungsgemeinschaft} (DFG, German Research Foundation) -- 428212407.}
\address{Universität Heidelberg,
	Institut für Informatik,
	Im Neuenheimer Feld 205,
	69120 Heidelberg, Germany}
\email{\{arras,garbe,joos\}@informatik.uni-heidelberg.de}
\begin{document}
\begin{abstract}
The number of independent sets in regular bipartite expander graphs can be efficiently approximated by expressing it as the partition function of a suitable polymer model and truncating its cluster expansion. While this approach has been extensively used for graphs, surprisingly little is known about analogous questions in the context of hypergraphs.

In this work, we apply this method to asymptotically determine the number of independent sets in regular $k$-partite $k$-uniform hypergraphs which satisfy natural expansion properties. The resulting formula depends only on the local structure of the hypergraph, making it computationally efficient. In particular, we provide a simple closed-form expression for linear hypergraphs.
\end{abstract}

\maketitle
\thispagestyle{empty}
\vspace{-0.4cm}

\section{Introduction}
Asymptotic enumeration of independent sets has a long history. One of the earliest results is the famous theorem due to Korshunov and Sapozhenko~\cite{KS83} that the $n$-dimensional discrete hypercube contains $(1 \pm o(1)) \cdot 2 \sqrt e \cdot 2^{2^{n-1}}$ independent sets. Note that this graph is bipartite with partition classes of order $2^{n-1}$, so roughly $2 \cdot 2^{2^{n-1}}$ independent sets already arise by considering only subsets of either partition class. In fact, it turns out that almost every independent set is close to being a subset of a partition class, that is having only a very small number of \emph{defect vertices} in the other class. Roughly speaking, this stems from the fact that every defect vertex~$v$ included in an independent set $I$ significantly reduces the variability of the intersection of $I$ with the other partition class by preventing all $n$ neighbours of $v$ from being selected.

Over the years, this result has inspired extensive research, leading to significant generalizations of the initial setting. Most often, these rephrase the enumeration of independent sets as an evaluation of the \emph{partition function} of a certain \emph{polymer model}, the \emph{hard-core model} from statistical physics, at \emph{fugacity} $1$. Generalizations then arise from varying the fugacity or modifying the polymer model. Translating back to the language of graphs, this amounts to computing a weighted sum of independent sets or counting more general objects such as proper $q$-colourings or homomorphisms to a fixed graph. One of the most comprehensive studies to date is due to Jenssen and Keevash~\cite{JK20}, who do not only allow for most of these variations, but also replace the hypercube with the more general class of discrete tori of even sidelength.

On the technical side of things, the four decades since Korshunov and Sapozhenko's initial result have also seen numerous improvements of the underlying methods. Perhaps the most impactful contributions were the introduction of a \emph{graph container method} by Sapozhenko~\cite{Sap89} to simplify the proof, and the introduction of entropy tools by Kahn~\cite{Kah01}, both of which have become ubiquitous ever since. Notably, they combine very well to form a useful machinery, as exemplified by the works of Peled and Spinka~\cite{PS23} as well as Kahn and Park~\cite{KP20}. Another recent result due to Helmuth, Perkins, and Regts~\cite{HPR20} demonstrates that the \emph{contour models} from Pirogov-Sinai theory can be applied yielding a similar effect.

While most early results concerned rather specific graph classes, it quickly became apparent that only a few properties of the graphs in question are actually relevant, namely regularity, bipartiteness, and a quantifiable (albeit small) expansion. One can then group the objects of interest by identifying their intersection with one of the two partition classes as their \emph{defect set} and count the objects grouped according to their defect set. Trying to relax the required graph properties as much as possible, a significant proportion of research is now devoted to graph classes so general that the final count cannot be brought into a closed-form expression. Instead, the statements obtained provide a fast algorithm for calculating an arbitrarily precise approximation of the desired number from the local structure of the specific graph. Such an algorithm is called a \emph{fully polynomial-time approximation scheme} (FPTAS) and constitutes the most detailed description one can hope for without requiring more information about the local structure of the graph in question. Recent results include many different settings for which an FPTAS for the number of independent sets in bipartite expanders have been found~\cite{CP20,CDK+23,CGG+21,CGS+22,FGK+23,GGS21,GGS22,JKP20,JMP24,JPP23,LLL+19,LLL+22}.

Obtaining similar results for non-bipartite graphs appears to be very difficult, as previous work by the first and third author~\cite{AJ23} indicates; Jenssen and Keevash~\cite{JK20} explicitly ask for the number of independent sets in $\ZZ_3^n$. The same question can of course be asked for hypergraphs. Here, an FPTAS for the number of (weak) independent sets is only known if the maximum degree is bounded by a constant depending on the uniformity~\cite{FGW+22,HWY23}. Otherwise, finding an approximation is NP-hard~\cite{BGG+19}. Our results therefore concern counting (weak) independent sets in hypergraphs with the natural generalization of bipartiteness (our hypergraphs are $k$-uniform and $k$-partite). As in the graph case, we assume some expansion properties. With this, we are able to approximate said number much closer than the best upper bound available in the non-$k$-partite setting~\cite{BS18,CPS+22}.

While our main result (\cref{thm: main}) is formulated in the language of cluster expansion and significantly more general, it gives rise to a straightforward method of calculating the number of independent sets if the degree is sufficiently large. Before we can formulate this precisely, we need to address how the aforementioned hypergraph properties look in our setting. 

Let $k \geq 2$. A hypergraph is called a \emph{$k$-graph} if every edge contains exactly $k$ vertices. It is called \emph{$k$-partite} if its vertex set can be partitioned into $k$ subsets (called \emph{partition classes}) such that every edge contains one vertex of each partition class. This is the natural way to move from bipartite ($2$-)graphs to the case of general $k$. We call a hypergraph \emph{$r$-regular} if every vertex is contained in exactly $r$ edges.

We write $\gamma_k \coloneqq \frac {2^{k-1}}{2^{k-1}-1} > 1$ for ease of notation. For a vertex subset $S$ of a $k$-graph $G$, we define its \emph{neighbourhood} by $N_G(S) \coloneqq \big( \bigcup_{e \in E(G), e \cap S \neq \emptyset} e \big) \setminus S$ and omit the subscript if $G$ is clear from the context.

\begin{definition}\label{def: expansion properties}
Let $k \geq 2$ and $G$ be an $r$-regular $k$-partite $k$-graph with vertex partition $\mathcal Z$, where each part has order $n$. Given $t,b \in \NN$ and $\alpha,\beta > 0$, we define the following properties of~$G$: \smallskip

\begin{tabular}{ll}
	\Reg{t}: & $r \geq \frac 1t \log_{\gamma_k} n$.\\
	\ExpM{\alpha}: & For every $S \subset Z \in \mathcal Z$ with $\abs S \leq r$, we have $\abs {N(S)} \geq (k-1-\alpha)r \abs S$.\\
	\ExpL{\beta}: & For every $S \subset Z \in \mathcal Z$ with $\abs S \leq \beta \frac nr$, we have $\abs {N(S)} \geq (k-2+\beta)r \abs S$.\\
	\Def{b}: & For every $I \in \II(G)$, there is some $Z \in \mathcal Z$ such that $\abs {I \cap Z} \leq b$.
\end{tabular}
\end{definition}

Note that \Reg{t} establishes a logarithmic relationship between the degree and the order of the hypergraph. This is very much akin to results on discrete tori such as the hypercube, which exhibit the same behaviour. Given a subset $S$ of some partition class $Z$, both expansion conditions $\mathrm{Exp_1}$ and $\mathrm{Exp_2}$ provide a lower bound on $\abs {N(S)}$. Note that the larger the set $S$, the smaller the required relative expansion. Lastly, property \Def{b} is necessary to allow for the aforementioned identification of defect sets. All of these conditions are typically satisfied in pseudorandom graphs and can also be expected for random regular $k$-partite $k$-graphs of logarithmic degree.

With this, we can formulate our result on enumerating \emph{independent sets}, that is sets that do not contain any edge as a subset. We denote the set consisting of all independent sets in $G$ by $\II(G)$. To obtain a closed-form expression for $\abs {\II(G)}$, we have to assume $G$ to be \emph{linear}, that is that every pair of distinct edges shares at most one vertex. The following is an immediate corollary of our main result. By $\alpha_k$, we denote a (small) positive number that only depends on~$k$. We omit the exact value for now, but make it explicit later on.

\begin{theorem}\label{thm: t=1}
Let $k \geq 3$ and $\beta > 0$. Furthermore, let $b(n)$ be a function with $b(n) = o(n)$. Then for any $\rho > 0$, there is $n_0 \in \NN$ such that the following holds for every $n \geq n_0$:

Let $G$ be a linear $r$-regular $k$-partite $k$-graph with vertex partition $\mathcal Z$, where each part has order $n$. If $G$ satisfies \Reg{1}, \ExpM{\alpha_k}, \ExpL{\beta}, and \Def{b(n)}, then
\[
	\abs {\II(G)}
	= (1 \pm \rho) \cdot k \cdot 2^{(k-1)n} \cdot \exp \left( n \gamma_k^{-r} \right)
	\,.
\]
\end{theorem}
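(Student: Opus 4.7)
The plan is to specialise \cref{thm: main} to the linear setting and then evaluate the resulting truncated cluster expansion to leading order. For each partition class $Z \in \mathcal Z$, take as reference configuration the independent set $V(G) \setminus Z$, which is indeed independent because every edge meets $Z$. An independent set with small intersection with $Z$ is then encoded by its ``defect'' $I \cap Z$ together with an arbitrary subset of the remaining $(k-1)n$ vertices compatible with the defect. Grouping the defect vertices into maximally connected blocks yields the polymer model underlying \cref{thm: main}, whose partition function $\Xi_Z$ satisfies that the number of independent sets with small defect on $Z$ equals $2^{(k-1)n} \cdot \Xi_Z$. By \cref{thm: main}, $\log \Xi_Z$ is approximated within $o(\rho)$ by a truncated cluster expansion.

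Summing over the $k$ partition classes would overcount independent sets that have small intersection with two or more classes simultaneously. The expansion conditions \ExpM{\alpha_k} and \ExpL{\beta}, combined with \Def{b(n)}, force such sets to be a negligible fraction of $\abs{\II(G)}$: if two classes $Z, Z'$ both contain $o(n)$ vertices of $I$, the other $k-2$ classes absorb most vertices of $I$, and standard expansion counting shows this is exponentially smaller than the main term. Hence
\[
    \abs{\II(G)} = (1 + o(1)) \cdot 2^{(k-1)n} \cdot \sum_{Z \in \mathcal Z} \Xi_Z,
\]
and it suffices to verify that $\log \Xi_Z = n\gamma_k^{-r} + o(\rho)$ for every $Z$.

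The main term comes from single-vertex polymers. For $v \in Z$, including $v$ in the independent set forbids, on each of the $r$ edges $e$ containing $v$, precisely the unique configuration that selects all of $e \setminus \{v\}$. By linearity, the vertex sets $e \setminus \{v\}$ for the $r$ edges through $v$ are pairwise disjoint, so these forbidden events act on independent blocks of $2^{k-1}$ configurations each. The weight of the singleton polymer $\{v\}$ is therefore exactly $(1 - 2^{-(k-1)})^r = \gamma_k^{-r}$, and summing over the $n$ vertices of $Z$ contributes $n\gamma_k^{-r}$ to $\log \Xi_Z$.

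The principal obstacle is controlling the tail: all clusters of size $\geq 2$ and all polymers on $\geq 2$ vertices must contribute $o(\rho)$. For a connected polymer $S \subset Z$ with $\abs S = s \geq 2$, linearity limits the number of pairs of edges through $S$ sharing more than one vertex outside $S$, so the polymer weight is bounded by $\gamma_k^{-rs}$ up to a polynomial-in-$s$ correction. The expansion bounds \ExpM{\alpha_k} and \ExpL{\beta} control the number of connected $s$-subsets of $Z$ incident to a given vertex, and the Koteck\'y--Preiss type convergence criterion that \cref{thm: main} relies on bounds multi-polymer clusters in the same way. The resulting total error is on the order of $n \gamma_k^{-2r}$ times a polynomial in $r$, and under \Reg{1} we have $\gamma_k^{-r} \leq n^{-1}$, so the entire tail is $o(1)$. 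Combining the three paragraphs yields the asymptotic formula.
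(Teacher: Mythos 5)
Your core calculation matches the paper's proof: the singleton polymer weight works out to $\gamma_k^{-r}$ because linearity makes $L_G(\{v\})$ a perfect matching of $r$ disjoint $(k-1)$-sets, so $w(\{v\}) = (2^{k-1}-1)^r \cdot 2^{-(k-1)r}$. (You phrase it probabilistically as $(1 - 2^{-(k-1)})^r$, but this is the same number.) The difference is one of economy. The paper's proof of \cref{thm: t=1} is a three-line invocation of \cref{thm: main} with $t=1$, which already packages the polymer-model encoding, the overcounting bound for independent sets with multiple defect classes (\cref{lem: split into polymer models}), the Koteck\'y--Preiss verification, and the truncation error. You re-derive all of this from scratch inside the argument, which is mathematically sound but redundant: your paragraph on overcounting, and your final paragraph on tail control via $n\gamma_k^{-2r}\cdot\mathrm{poly}(r)$, are precisely what \cref{thm: main} already guarantees. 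One small conceptual slip worth flagging: you attribute the polymer weight bound $w(S) \lesssim \gamma_k^{-rs}$ to linearity ``limiting pairs of edges sharing vertices,'' but the paper obtains this bound from the expansion hypotheses $\mathrm{Exp_1}$ and $\mathrm{Exp_2}$ (which lower-bound $\abs{N(S)}$ and the matching number of $L_G(S)$), not from linearity; linearity only enters to compute the \emph{exact} weight of size-$1$ polymers. For a sketch this is fine, but in a full proof you would need the expansion conditions at exactly that step.
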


Note that while our proof requires $k \geq 3$, the same statement is also true for $k = 2$. In fact, this follows from a more general result of Jenssen, Perkins, and Potukuchi~\cite{JPP23}. We discuss the details in our concluding remarks. Also note that $r \geq \log_{\gamma_k} n$ by \Reg{1} implies that the final exponential term contributes at most a factor of $e$. Moreover, if $r$ is larger, even if only by a constant factor, the term tends to $1$ as $n \to \infty$, which in turn implies that essentially all independent sets have an empty intersection with one partition class. The primary advantage of our main theorem (\cref{thm: main}) is that it provides a similar bound while requiring only \Reg{t} for some $t \in \NN$. It turns out, however, that the larger $t$ becomes, the more terms we have to include in the argument of the exponential. This reflects the fact that with lower degree, the cost of including a vertex in the independent set (that is, the amount of neighbours now prevented from being included) decreases, so larger defect sets arise.

As the defect sets become larger, their typical structure changes as well. In the hypercube result of~\cite{KS83}, multiple defect vertices of the same independent set are typically at distance greater than $2$ from each other and can thus be chosen essentially independently. The same is true for the setting of \cref{thm: t=1}. For larger $t$, however, larger defect sets mean that there is now also a significant proportion of independent sets with defect vertices whose neighbourhoods intersect.

The paper is structured as follows. In \cref{sec: state main} we introduce the necessary notation to formalize our cluster expansion approach and state our main theorem. We also discuss a few applications such as \cref{thm: t=1}. We then collect some tools and further preliminaries in \cref{sec: tools nota}. Afterwards, we prove our main theorem in \cref{sec: proof main}, while for now assuming the key technical ingredient (the verification of the Kotecký-Preiss condition in \cref{lem: KP condition holds}). Finally, \cref{sec: KP} is dedicated to the proof of \cref{lem: KP condition holds}.

\section{Statement of the main theorem}\label{sec: state main}
\subsection{Cluster expansion}\label{subsec: ce}
We use the well-known cluster expansion approach from statistical physics, mostly following the notation of~\cite{JK20}. This section first introduces the necessary concepts in general and then defines the concrete polymer model used throughout the paper. Such a \emph{polymer model} $(\PM, \sim, w)$ consists of a set $\PM$ of so-called \emph{polymers}, together with a \emph{compatibility} relation $\sim$ on $\PM$ and a weight function $w \colon \PM \to [0, \infty)$. The \emph{order} of a polymer $S$ is simply $\abs S$, its number of elements. A set $\mathcal S \subset \PM$ of polymers is called \emph{compatible} if the polymers in $\mathcal S$ are pairwise compatible. The \emph{partition function} of the polymer model is then given by
\begin{align}\label{eq: partition function}
	\Xi_{(\PM, \sim, w)} 
	\coloneqq \sum_{\substack{\mathcal S \subset \PM\\\text{$\mathcal S$ compatible}}} \prod_{S \in \mathcal S} w(S)
	\,.
\end{align}

For a non-empty vector $\Gamma$ of not necessarily distinct polymers, we let its \emph{incompatibility graph}~$H_\Gamma$ be the graph with the polymers of $\Gamma$ as vertices and edges between two polymers $S, T \in \Gamma$ if $S \nsim T$. A \emph{cluster} is a vector $\Gamma$ of polymers such that $H_\Gamma$ is connected. We denote its \emph{length} (that is, the number of, not necessarily distinct, polymers in the cluster) as $\abs \Gamma$ and obtain its \emph{size} $\norm \Gamma \coloneqq \sum_{S \in \Gamma} \abs S$ by adding up the orders of all polymers in the cluster. We also denote the infinite set of all clusters as $\CM_{(\PM, \sim, w)}$.

Recall now that the \emph{Ursell function} of a connected graph $H$ is
\begin{align*}
	\phi(H)
	\coloneqq \frac 1{\abs {V(H)}!} \sum_{\substack{\text{spanning, connected}\\\text{subgraphs $F \subset H$}}} (-1)^{\abs {E(F)}}
	\,.
\end{align*}
With this, we define the weight of a cluster $\Gamma$ as 
\begin{align}\label{eq: cluster weights}
	w(\Gamma) 
	\coloneqq \phi(H_\Gamma) \prod_{S \in \Gamma} w(S)
	\,.
\end{align}
By~\cite[Proposition 5.3]{FV17}, we have
\begin{align}\label{eq: cluster expansion} 
	\log \Xi_{(\PM, \sim, w)}
	= \sum_{m = 1}^\infty \sum_{\substack{\Gamma \in \CM_{(\PM, \sim, w)} \\ \norm \Gamma = m}} w(\Gamma)
	\,,
\end{align}
if the right-hand side converges absolutely. This sum is called the \emph{cluster expansion} of the polymer model $(\PM, \sim, w)$ and, provided it converges, can be truncated to yield a good approximation of $\log \Xi_{(\PM, \sim, w)}$.

\subsection{Polymer model}\label{subsec: polymer model}
For a hypergraph $G$ and a vertex subset $S \subset V(G)$, we write $\overline S \coloneqq V(G) \setminus S$ for the complement of $S$ and $G[S]$ for the subgraph of $G$ induced by $S$. In order to define a polymer model suitable for our task of counting independent sets, we use the notion of $2$-linkedness: For a $k$-partite $k$-graph $G$ and a partition class $Z$, we consider the $2$-graph $Z_2$ on the vertex set $Z$ with $vu \in E(Z_2)$ if and only if $N_G(\{ v \}) \cap N_G(\{ u \}) \neq \emptyset$. A non-empty set $S \subset Z$ is called \emph{$2$-linked} if $Z_2[S]$ is connected.

Furthermore, for a $k$-partite $k$-graph $G$ and a subset $S$ of one partition class, we define the \emph{link graph} of $S$ as the $(k-1)$-graph $L_G(S)$ on the vertex set $N_G(S)$ with edge set
\[
	E(L_G(S))
	\coloneqq \{ e \setminus S \colon \text{$e \in E(G)$ with $e \cap S \neq \emptyset$} \}
	\,.
\]

Now we can describe the polymer model we will use. For a $k$-partite $k$-graph $G$, any partition class $Z$, and a number $b \in \NN$, we consider the polymer model given by 
\begin{align}\label{eq: polymer}
	\PM_{Z,b} \coloneqq \{ S \subset Z \colon \text{$S$ is $2$-linked and $1 \leq \abs S \leq b$} \}
	\,,
\end{align}
the compatibility relation defined by $S \sim T$ if and only if $N_G(S) \cap N_G(T) = \emptyset$, and the polymer weights of 
\begin{align}\label{eq: polymer weights}
	w(S) 
	\coloneqq \abs {\II(L_G(S))} \cdot 2^{-\abs {N_G(S)}}
	\,.
\end{align}
For the sake of simplicity, we write $\Xi_{Z,b} \coloneqq \Xi_{(\PM_{Z,b}, \sim, w)}$ and $\CM_{Z,b} \coloneqq \CM_{(\PM_{Z,b}, \sim, w)}$ for the partition function and the clusters of this polymer model.

\subsection{Main theorem}
We have now introduced the setup to be able to state our main theorem. Afterwards, we discuss how to interpret the statement and how to apply it, for example to prove \cref{thm: t=1}. We set 
\begin{align}\label{eq: alpha0}
	\alpha_{k,t}
	\coloneqq \frac 12 \min \left\lbrace \frac {\log_2 \gamma_k}{e^{2t}} ; \frac {(k-1)(1 - \log 2)\log \gamma_k}{\log (2^{k-1} - 1) + \log \gamma_k} \right\rbrace
\end{align}
and remark that $0 < \alpha_{k,t} < 1$ for every $k \geq 2$ and $t \in \NN$.

\begin{theorem} \label{thm: main}
Let $k \geq 3$, $t \in \NN$, and $\beta > 0$. Furthermore, let $b(n)$ be a function with $b(n) = o(n)$. Then for any $\rho > 0$, there is $n_0 \in \NN$ such that the following holds for every $n \geq n_0$:
	
Let $G$ be an $r$-regular $k$-partite $k$-graph with vertex partition $\mathcal Z$, where each part has order $n$. If $G$ satisfies \Reg{t}, \ExpM{\alpha_{k,t}}, \ExpL{\beta}, and \Def{b(n)}, then
\[
	\abs {\II(G)}
	= (1 \pm \rho) \cdot 2^{(k-1)n} \cdot \sum_{Z \in \mathcal Z} \exp \left( \sum_{m = 1}^t \sum_{\substack{\Gamma \in \CM_{Z,t}\\\norm \Gamma = m}} w(\Gamma) \right)
	\,.
\]
\end{theorem}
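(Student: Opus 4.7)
The plan is the standard polymer/cluster-expansion machinery: reduce $\abs{\II(G)}$ to the polymer partition functions $\Xi_{Z,b(n)}$, apply~\eqref{eq: cluster expansion} via \cref{lem: KP condition holds}, and truncate at cluster size $t$ using \Reg{t}. For each $Z\in\mathcal Z$ set $\II_Z\coloneqq\{I\in\II(G):\abs{I\cap Z}\le b(n)\}$ and write $D\coloneqq I\cap Z$. The $k$-partite structure forces any edge contained in $I$ to have its unique $Z$-vertex in $D$; hence $I\setminus D$ must be independent in $L_G(D)$ and is otherwise free on $\overline Z\setminus N_G(D)$, giving
\[
\abs{\II_Z} = 2^{(k-1)n}\sum_{D\subset Z,\;\abs D\le b(n)} \abs{\II(L_G(D))}\cdot 2^{-\abs{N_G(D)}}.
\]

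The $2$-linked components $S_1,\dots,S_m$ of $D$ have pairwise disjoint neighbourhoods (otherwise two components would be joined in $Z_2$), so $L_G(D)=\bigsqcup_i L_G(S_i)$ and the summand factorises as $\prod_i w(S_i)$ in view of~\eqref{eq: polymer weights}. Dropping the global cap $\abs D\le b(n)$ only adds configurations with $\sum_i\abs{S_i}>b(n)$, each of weight at most $2^{-(k-1-\alpha_{k,t})rb(n)}$ by \ExpM{\alpha_{k,t}}; together with a tree-counting bound on 2-linked sets of prescribed size, this contribution is negligible, and so $\abs{\II_Z}=(1+o(1))\cdot 2^{(k-1)n}\cdot\Xi_{Z,b(n)}$. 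Any $I$ with $\abs{I\cap Z},\abs{I\cap Z'}\le b(n)$ for two distinct classes satisfies $\abs I\le(k-2)n+2b(n)$, so there are at most $2^{(k-2)n+o(n)}=o(2^{(k-1)n})$ such sets, and a short inclusion-exclusion collapses to $\abs{\II(G)}=(1+o(1))\cdot 2^{(k-1)n}\cdot\sum_{Z\in\mathcal Z}\Xi_{Z,b(n)}$.

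Granting \cref{lem: KP condition holds}, the Kotecký-Preiss condition holds for $(\PM_{Z,b(n)},\sim,w)$, so~\eqref{eq: cluster expansion} applies with absolute convergence. A standard consequence of Kotecký-Preiss bounds $\sum_{\norm\Gamma=m}\abs{w(\Gamma)}\le n\gamma_k^{-rm}(1+o(1))$, so the tail past $m=t$ is $O(n\gamma_k^{-r(t+1)})$. Since \Reg{t} yields $\gamma_k^{-r}\le n^{-1/t}$, this tail is $O(n^{-1/t})=o(1)$. Moreover, every cluster with $\norm\Gamma\le t$ uses only polymers of size $\le t\le b(n)$ for large $n$, so $\CM_{Z,b(n)}$ may be replaced by $\CM_{Z,t}$ in the truncated expansion; exponentiating and combining with the reduction yields the claimed formula, absorbing all multiplicative errors into the factor $(1\pm\rho)$.

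The delicate point in the main proof (granting \cref{lem: KP condition holds}) is the reduction step: passing from the canonical constraint $\abs D\le b(n)$ to the grand-canonical $\Xi_{Z,b(n)}$ requires both \ExpM{\alpha_{k,t}} (to control small components) and \ExpL{\beta} (to control components up to $\beta n/r$). The genuine technical crux, however, lives in verifying the Kotecký-Preiss condition itself; this is presumably why the constant $\alpha_{k,t}$ in~\eqref{eq: alpha0} must be as tight as it is, and is the subject of \cref{sec: KP}.
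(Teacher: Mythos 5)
Your high-level plan — reduce $\abs{\II(G)}$ to the partition functions $\Xi_{Z,b(n)}$, run the cluster expansion granted \cref{lem: KP condition holds}, and truncate at size $t$ — is indeed the paper's plan. But the reduction step is set up differently from the paper, and the difference creates a gap.

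You define $\II_Z=\{I:\abs{I\cap Z}\le b(n)\}$ and so obtain a sum over defect sets $D\subset Z$ with $\abs D\le b(n)$; you then want to drop the global cap on $\abs D$ to reach $\Xi_{Z,b(n)}$ at the cost of a $(1+o(1))$ factor. The paper avoids this step entirely: it declares $Z$ a \emph{defect class} for $I$ when every $2$-linked component of $I\cap Z$ has order at most $b(n)$ (not when $\abs{I\cap Z}\le b(n)$), and then the bijection between such $I$ and compatible polymer families gives an \emph{exact} identity $\abs{\II_Z(G)}=2^{(k-1)n}\Xi_{Z,b(n)}$. Under your setup the cap removal is a real argument, and the one you sketch doesn't go through: you claim each added configuration has weight at most $2^{-(k-1-\alpha_{k,t})rb(n)}$ by \ExpM{\alpha_{k,t}}, but $w(S)=\abs{\II(L_G(S))}\cdot 2^{-\abs{N_G(S)}}$ and the first factor can be as large as $2^{\abs{N_G(S)}}$, so in general one only has $w(S)\le 1$; \ExpM{\alpha_{k,t}} bounds $2^{-\abs{N(S)}}$ but not $w(S)$. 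Getting a genuinely decaying weight bound is exactly the content of \cref{lem: expansion guarantees linear matching,lem: matching approach} (a matching in $L_G(S)$ forces $w(S)\le\gamma_k^{-m(S)}$), which you would need here. Moreover, the set $T=\bigcup_i S_i$ that you sum over is \emph{not} $2$-linked — it is a disjoint union of $2$-linked pieces — so the ``tree-counting bound on $2$-linked sets'' (\cref{lem: 2-linked fixed size}) does not bound the number of such $T$; you would need a product/binomial count over the components. Either adopt the paper's component-based definition of defect class (cleaner, no approximation), or carry out the cap removal with the matching lemmas and a correct count.

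Two smaller points. Your claim ``$\abs I\le (k-2)n+2b(n)$, so there are at most $2^{(k-2)n+o(n)}$ such sets'' does not follow from the size bound alone (e.g.\ for $k=3$ the number of subsets of size $\le n$ in a $3n$-set is exponentially larger than $2^{2n}$); the correct count must use that $I$ is small in the two specific classes $Z,Z'$ and free only on the remaining $(k-2)n$ vertices. And the estimate ``$\sum_{\norm\Gamma=m}\abs{w(\Gamma)}\le n\gamma_k^{-rm}(1+o(1))$'' is not a direct consequence of Kotecký--Preiss; the paper's actual bound carries extra polynomial factors in $r$ (see~(\ref{eq: contribution prefix})), which is why the truncation argument is split into a direct count for $t+1\le m\le e^t$ and the Kotecký--Preiss tail bound via the function $g$ only for $m>e^t$. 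The conclusion (tail is $o(1)$ in the logarithm) is correct, but the intermediate claim overstates what is available.
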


Notably, neither the parameter $\beta$ nor the function $b(n)$ occur in the formula of \cref{thm: main}, but only influence the choice of $n_0$, which we do not make explicit. Likewise, both occurrences of the parameter $t$ only determine the cluster size (and consequently, maximum polymer order) at which to truncate the sum in the argument of the exponential. Additional terms could still be computed and included, but are guaranteed to vanish into the error term of $1 \pm \rho$ and thus do not affect the asymptotics of $\abs {\II(G)}$.

In other words, \cref{thm: main} asserts that for any hypergraph $G$ satisfying the required properties, the asymptotic number of independent sets is governed solely by the clusters up to size $t$. The structure of these clusters, however, is a local property of $G$: Since the polymers in a cluster $\Gamma \in \CM_{Z,t}$ are $2$-linked and the incompatibility graph $H_\Gamma$ of $\Gamma$ is connected, the set $V(\Gamma) \coloneqq \bigcup_{S \in \Gamma} S$ is $2$-linked as well. Moreover, we have $\abs {V(\Gamma)} \leq \norm \Gamma$. In order to determine all clusters of size at most a constant $t$ that contain some fixed vertex $v$, it therefore suffices to only consider vertices at distance at most $2(t-1)$ from $v \in Z$, which can be done in time polynomial in $n$.

For $t = 1$, this comes down to only examining the link graph $L_G(\{ v \})$ of $v$ itself. In general $r$-regular $k$-partite $k$-graphs, the vertex $v$ might share multiple edges with any other vertex $u$, which gives rise to a multitude of possible link graphs. If we require $G$ to be linear, however, this cannot occur and $L_G(\{ v \})$ is a perfect matching of $r$ pairwise disjoint edges of uniformity $k-1$. This very simple local structure allows for the deduction of \cref{thm: t=1} from \cref{thm: main}.

\begin{proof}[Proof of \cref{thm: t=1}]
Setting $\alpha_k \coloneqq \alpha_{k,1}$, any $G$ in \cref{thm: t=1} satisfies the requirements of \cref{thm: main} with $t = 1$. It remains to determine all clusters $\Gamma \in \CM_{Z,1}$ with $\norm \Gamma = 1$ and compute $w(\Gamma)$. Since polymers are non-empty by (\ref{eq: polymer}), the only possibility is $\Gamma = (\{ v \})$ for $v \in Z$. Naturally, there are exactly $\abs Z = n$ such clusters. 

The incompatibility graph of a cluster $\Gamma = (\{ v \})$ is a single vertex, so $\phi(H_{(\{ v \})}) = 1$. Since $L_G(\{ v \})$ is a perfect matching of $r$ pairwise disjoint edges of uniformity $k-1$, we have $\abs {\II(L_G(\{ v \}))} = (2^{k-1} - 1)^r$ and $2^{-\abs {N(\{ v \})}} = 2^{-(k-1)r}$. Altogether, we obtain $w(\Gamma) = \gamma_k^{-r}$ from (\ref{eq: cluster weights}) and (\ref{eq: polymer weights}).

The argument of the exponential in \cref{thm: main} thus evaluates to $n\gamma_k^{-r}$. Since this is true for any $Z \in \mathcal Z$, the desired $\abs {\II(G)} = (1 \pm \rho) \cdot k \cdot 2^{(k-1)n} \cdot \exp (n\gamma_k^{-r})$ follows.
\end{proof}

If the degree of $G$ is lower, we need to calculate more terms and therefore require additional information about the local structure of $G$. To illustrate this, we present the case $t = 2$. While linearity still guarantees that the link graph of any singular vertex $v$ is a matching of $r$ edges, a cluster of size $2$ now contains a second vertex $u$ whose neighbourhood has to intersect that of $v$ in at least one vertex. If we prevent larger intersections, we can still obtain a simplified formula.

A natural property which prevents such intersections is \emph{high girth} with respect to loose cycles: For $k \geq 2$ and $\ell \geq 3$, a \emph{loose $\ell$-cycle} in a $k$-graph $G$ is a sequence of $(k-1)\ell$ distinct vertices $v_1, \ldots, v_{(k-1)\ell}$ such that $\{ v_{(k-1)(i-1) + 1}, \ldots, v_{(k-1)i + 1} \} \in E(G)$ for every $i \in \{ 1, \ldots, \ell \}$, where subscripts are taken modulo $(k-1)\ell$. The minimum $\ell$ such that $G$ contains a loose $\ell$-cycle is called the \emph{girth} of $G$. If a $k$-graph contains no loose cycle, we say that it has girth $\infty$. With this, one can prove the following property.

\begin{lemma}\label{lem: high girth determines local structure}
Let $k \geq 3$ and $G$ be a linear $k$-partite $k$-graph of girth at least $5$. Then for any partition class $Z$ and any edge $vu \in E(Z_2)$, we have $\abs {N_G(\{ v \}) \cap N_G(\{ u \})} = 1$.
\end{lemma}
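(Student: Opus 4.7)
The plan is to argue by contradiction: assume there exist two distinct vertices $x, y \in N_G(\{v\}) \cap N_G(\{u\})$ and extract from this configuration a short loose cycle, contradicting the girth hypothesis. By definition of the neighbourhood I can fix edges $e_1, e_2 \ni v$ with $x \in e_1$, $y \in e_2$, and edges $f_1, f_2 \ni u$ with $x \in f_1$, $y \in f_2$. Since $v$ and $u$ lie in the same partition class, no edge of $G$ contains both, so every edge on the $v$-side differs from every edge on the $u$-side, i.e.\ $e_i \neq f_j$ for all $i, j \in \{1, 2\}$.

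I would then split into three cases according to whether $e_1 = e_2$ and whether $f_1 = f_2$. If both equalities hold, then $e_1 \cap f_1 \supseteq \{x, y\}$, violating linearity outright. If exactly one holds, say $e_1 = e_2 = e$ and $f_1 \neq f_2$, then the three pairwise distinct edges $e, f_1, f_2$ contain $\{x\}, \{u\}, \{y\}$ in their respective pairwise intersections, and by linearity each such intersection equals exactly the stated singleton; these three edges therefore close up into a loose $3$-cycle. If neither equality holds, then the four pairwise distinct edges $e_1, f_1, f_2, e_2$ arranged around the ``rectangle'' have consecutive intersections $\{x\}, \{u\}, \{y\}, \{v\}$ (again singleton by linearity). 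Should the two diagonal intersections $e_1 \cap f_2$ and $e_2 \cap f_1$ both be empty, these four edges constitute a loose $4$-cycle. Otherwise, say $e_1 \cap f_2$ contains some vertex $z$: one checks that $z \neq v, u$ (since $v \notin f_2$ and $u \notin e_1$ by $k$-partiteness) and $z \neq x, y$ (otherwise $f_1 \cap f_2$ or $e_1 \cap e_2$ would have size $\geq 2$, contradicting linearity), so the three edges $e_1, f_2, f_1$ form a loose $3$-cycle with corner vertices $z, u, x$.

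The one technical point in each case is to verify that the candidate cycle actually has the required $(k-1)\ell$ distinct vertices. This, however, is just bookkeeping: any coincidence between inner vertices of two distinct edges of the cycle would immediately force those edges to share a second vertex, contradicting linearity. Once this is in place, every case produces a loose cycle of length at most $4$, contradicting the assumption that $G$ has girth at least $5$. We conclude $|N_G(\{v\}) \cap N_G(\{u\})| \leq 1$, and since $vu \in E(Z_2)$ provides the opposite inequality, equality holds.
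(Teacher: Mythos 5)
Your proposal is correct, but it follows a genuinely different route from the paper. The paper constructs a single closed walk $C_4$ through the four edges (allowing vertex repetitions) and then invokes a separate inductive technical lemma (\cref{lem: high girth technical}) whose job is precisely to reduce such a walk to a genuine loose cycle of no greater length; the case analysis on which edges coincide ($e_1 = e_2$, $f_1 = f_2$) is handled by modifying the walk before applying that lemma. You instead skip the auxiliary lemma entirely: you case-split on which of the four edges coincide and on whether the ``diagonal'' intersections $e_1 \cap f_2$ and $e_2 \cap f_1$ are empty, and in each case you exhibit three or four pairwise distinct edges whose consecutive pairwise intersections are distinct singletons and whose higher-order intersections are empty, so that they assemble directly into a loose cycle of length at most $4$. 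This is more self-contained and avoids the induction of \cref{lem: high girth technical}, at the cost of being tailored to the four-edge configuration rather than reusable. One small caveat: your final ``bookkeeping'' remark is stated a bit loosely --- a coincidence between an inner vertex of one edge and a \emph{corner} vertex of another also needs ruling out, not only inner--inner coincidences --- but this is handled by the same linearity argument (such a coincidence would give two edges a second common vertex), so the gap is purely expository and the proof stands.
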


We defer its proof to the appendix. \cref{lem: high girth determines local structure} shows that requiring $G$ to have girth at least~$5$ is sufficient to determine the local structure of $G$ for clusters of size $2$. The following theorem demonstrates how to use this knowledge to calculate the terms of the cluster expansion for $t = 2$.

\begin{theorem}\label{thm: t=2}
Let $k \geq 3$ and $\beta > 0$. Furthermore, let $b(n)$ be a function with $b(n) = o(n)$. Then for any $\rho > 0$, there is $n_0 \in \NN$ such that the following holds for every $n \geq n_0$:

Let $G$ be a linear $r$-regular $k$-partite $k$-graph of girth at least $5$ with vertex partition $\mathcal Z$, where each part has order $n$. If $G$ satisfies \Reg{2}, \ExpM{\alpha_{k,2}}, \ExpL{\beta}, and \Def{b(n)}, then
\[
	\abs {\II(G)}
	= (1 \pm \rho) \cdot k \cdot 2^{(k-1)n} \cdot \exp \left( n \gamma_k^{-r} + \frac {k-1}4 r^2n \gamma_k^{-2r} \left( \frac {r-1}{r} \gamma_k^2 \left( 1 + \left( \frac {2^{k-2} - 1}{2^{k-2}} \right)^2 \right) - 2 \right) \right)
	\,.
\]
\end{theorem}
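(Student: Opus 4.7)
The plan is to deduce \cref{thm: t=2} as a corollary of \cref{thm: main} applied with $t = 2$, in the same spirit as the proof of \cref{thm: t=1}. Under the hypotheses of \cref{thm: t=2}, \cref{thm: main} reduces the problem to computing the truncated sum
\[
    S \;=\; \sum_{m=1}^{2}\sum_{\substack{\Gamma\in\CM_{Z,2}\\\norm{\Gamma}=m}} w(\Gamma)
\]
for a fixed partition class $Z$ and then verifying that $\exp(S)$ matches the claimed expression. The $m = 1$ part contributes exactly $n\gamma_k^{-r}$ by the identical argument used in the $t = 1$ case, so the bulk of the work lies in the $m = 2$ part.

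Clusters with $\norm{\Gamma} = 2$ split into two types: (a) length-one clusters $\Gamma = (S)$ where $S = \{v,u\} \subset Z$ is $2$-linked, i.e.\ $vu \in E(Z_2)$; and (b) length-two clusters $\Gamma = (\{v\},\{u\})$ of singleton polymers whose incompatibility graph $H_\Gamma \isom K_2$ is connected, which by the compatibility definition means $N_G(\{v\}) \cap N_G(\{u\}) \neq \emptyset$. Since $N_G(\{v\}) \neq \emptyset$, this includes the diagonal case $v = u$. For type~(b) I will use $\phi(K_2) = -\tfrac12$ and $w(\{v\}) = \gamma_k^{-r}$ to assign each such cluster the weight $-\tfrac12\gamma_k^{-2r}$, and count: there are $n$ diagonal clusters and $2\abs{E(Z_2)}$ off-diagonal ones. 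Here $\abs{E(Z_2)} = \tfrac12 n(k-1)r(r-1)$ is obtained by double counting triples $(v,u,w)$ with $w \in N_G(\{v\})\cap N_G(\{u\})$, using linearity to count the $r-1$ other edges through a fixed $w$ and \cref{lem: high girth determines local structure} to ensure the map $(v,u)\mapsto w$ is a bijection onto edges of $Z_2$.

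For type~(a), the crux is determining the structure of the link graph $L_G(\{v,u\})$ for a $2$-linked pair $\{v,u\}$. Linearity at $v$ (respectively at $u$) turns the $r$ edges of $G$ through $v$ (respectively $u$) into a matching of $(k-1)$-edges in $L_G$. A short case analysis using girth $\geq 5$ rules out loose $3$- and $4$-cycles in $G$ that would force further intersections between these two matchings, so the only interaction is that the unique $v$-edge and unique $u$-edge through the common neighbour $w$ supplied by \cref{lem: high girth determines local structure} share exactly the vertex $w$. Thus $L_G(\{v,u\})$ decomposes as the disjoint union of $2(r-1)$ isolated $(k-1)$-edges and a \emph{cherry} of two $(k-1)$-edges meeting only at $w$. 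From this structure I obtain $\abs{\II(L_G(\{v,u\}))} = (2^{k-1}-1)^{2(r-1)}(2^{2k-3} - 2^{k-1} + 1)$ and $\abs{N_G(\{v,u\})} = 2(k-1)r - 1$, which determine $w(\{v,u\})$ via (\ref{eq: polymer weights}).

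The main obstacle is this structural analysis of $L_G(\{v,u\})$: enumerating the short cycles that would arise from any additional overlap between the two matchings and verifying that each is forbidden by the girth condition is exactly where the girth-$5$ assumption is spent. Once the structure is pinned down, the remainder is algebraic: substitute $w(\{v,u\})$ and the type~(b) weights into $S$, sum over all $Z \in \mathcal{Z}$ to gain the factor of $k$, and simplify the resulting polynomial expression in $r$ into the closed form appearing in the statement of \cref{thm: t=2}.
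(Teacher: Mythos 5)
Your overall strategy is the same as the paper's: invoke \cref{thm: main} with $t = 2$, carry over the size-one cluster contribution $n\gamma_k^{-r}$, split the size-two clusters into length-two tuples of singletons and length-one clusters $(\{v,u\})$, use \cref{lem: high girth determines local structure} to pin down the local structure, and compute. Your structural analysis of $L_G(\{v,u\})$ is correct — $2(r-1)$ isolated $(k-1)$-edges plus a cherry through the unique common neighbour $w$, giving $\abs{N_G(\{v,u\})}=2(k-1)r-1$ and $\abs{\II(L_G(\{v,u\}))}=(2^{k-1}-1)^{2(r-1)}(2^{2k-3}-2^{k-1}+1)$ — and your count of $\abs{E(Z_2)}=\tfrac12 n(k-1)r(r-1)$ for the length-one clusters matches the paper's.

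However, there is a genuine gap in your final step, and it concerns the count of the length-two clusters $(\{v\},\{u\})$. You count $n + 2\abs{E(Z_2)} = n + n(k-1)r(r-1)$ such ordered tuples (one diagonal per $v$, plus the two orientations of each edge of $Z_2$). The paper's proof instead uses the count $n(k-1)r^2$, obtained by choosing $v$, then $x\in N(\{v\})$, then $u\in N(\{x\})\cap Z$, together with the assertion — justified via \cref{lem: high girth determines local structure} — that each admissible $u$ arises from a unique $x$. That assertion is only valid for $u\ne v$; for $u=v$ every one of the $(k-1)r$ vertices $x$ works, so the map $(v,x,u)\mapsto(v,u)$ is $(k-1)r$-to-one on the diagonal, not a bijection. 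The two counts differ by $n((k-1)r-1)$. Your count is the more careful one, and your two counts are mutually consistent (the paper's two counts are not: off-diagonal ordered pairs are $2\abs{E(Z_2)}=n(k-1)r(r-1)$, and adding $n$ diagonals does not give $n(k-1)r^2$). But precisely because the closed form in the statement of \cref{thm: t=2} is algebraically equivalent to the paper's count $n(k-1)r^2$, your count does \emph{not} simplify to that closed form: the exponents differ by $\tfrac{n}{2}\bigl((k-1)r-1\bigr)\gamma_k^{-2r}$, and since $\Reg{2}$ allows $r$ as small as $\tfrac12\log_{\gamma_k}n$, this discrepancy can be of order $\Theta(\log n)$, contributing a polynomial (in $n$) multiplicative factor that the $(1\pm\rho)$ error term does not absorb. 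So the asserted last step, "simplify the resulting polynomial expression in $r$ into the closed form appearing in the statement," does not go through: either you must reproduce the paper's count $n(k-1)r^2$ (and confront the $u=v$ issue in justifying it), or you must accept that your careful count leads to a formula with an additional $\tfrac12(1-(k-1)r)\,n\gamma_k^{-2r}$ term in the exponent.
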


\begin{proof}
Any $G$ in \cref{thm: t=2} satisfies the requirements of \cref{thm: main} with $t = 2$. The calculation for clusters of size $1$ is identical to the proof of \cref{thm: t=1}. It remains to determine all clusters $\Gamma \in \CM_{Z,2}$ with $\norm \Gamma = 2$ and compute $w(\Gamma)$. These split into two distinct types.

Firstly, there are clusters $\Gamma = (\{ v \}, \{ u \})$ for $v, u \in Z$ (not necessarily distinct). Given one of the $n$ vertices $v \in Z$, we know that $u$ must be a neighbour of one of the $(k-1)r$ vertices $x \in N(\{ v \})$. Additionally, each such $u$ cannot be a neighbour of another $x' \in N(\{ v \}) \setminus \{ x \}$, otherwise $x, x' \in N(\{ v \}) \cap N(\{ u \})$ in contradiction to \cref{lem: high girth determines local structure}. Since each $x \in N(\{ v \})$ has exactly $r$ neighbours in $Z$, we conclude that there are $n(k-1)r^2$ clusters of the first type (note that order matters). Their incompatibility graph is a single edge with $\phi(H_{(\{ v \}, \{ u \})}) = -\frac 12$. The weight of a polymer of order $1$ has already been computed as $\gamma_k^{-r}$ in the proof of \cref{thm: t=1}, so by using (\ref{eq: cluster weights}), we get
\[
	\sum_{\substack{\Gamma = (\{ v \}, \{ u \}) \in \CM_{Z,2} \\ v,u \in Z}} w(\Gamma)
	= -\frac 12 n(k-1)r^2 \gamma_k^{-2r}
\]
as the contribution of all clusters of the first type to the argument of the exponential.

Secondly, there are clusters $\Gamma = (\{ v, u \})$ for distinct $v, u \in Z$. Given one of the $n$ vertices $v \in Z$, we again select any of the $(k-1)r$ vertices $x \in N(\{ v \})$ and $u$ as a neighbour of $x$ in $Z$. Again, each $u$ can only be a neighbour of one $x \in N(\{ v \})$ by \cref{lem: high girth determines local structure}. Since we must not pick $v$ itself this time, there are only $r-1$ choices for $u$ given $x$. Moreover, the order of vertices in a polymer does not matter, which halves the number of clusters to $\frac 12 n(k-1)r(r-1)$ of the second type.

Their incompatibility graph is a single vertex with $\phi(H_{(\{ v, u \})}) = 1$. According to \cref{lem: high girth determines local structure}, the fact that $G$ has girth at least $5$ guarantees that the link graph of a polymer $\{ v, u \}$ can be obtained by intersecting the link graphs of $v$ and $u$ (in both cases, $r$ disjoint edges of uniformity $k-1$) in a single vertex. This yields $2r-2$ disjoint edges of uniformity $k-1$ plus two edges of uniformity $k-1$ intersecting in a single vertex. By (\ref{eq: polymer weights}), the disjoint edges again contribute $\gamma_k^{-(2r-2)}$ to $w(\{ v, u \})$. The two intersecting edges of uniformity $k-1$ contribute $(2^{k-2})^2 + (2^{k-2} - 1)^2$ to $\abs {\II(L_G(\{ v, u \}))}$ and $2^{-2k+3}$ to $2^{-\abs {N(\{ v, u \})}}$, so $\frac 12 + \frac 12 \Big( \frac {2^{k-2} - 1}{2^{k-2}} \Big)^2$ to $w(\{ v, u \})$. Therefore by (\ref{eq: cluster weights}), the entirety of clusters of the second type contributes 
\[
	\sum_{\substack{\Gamma = (\{ v, u \}) \in \CM_{Z,2} \\ v,u \in Z}} w(\Gamma)
	= \frac 12 n(k-1)r(r-1) \cdot \gamma_k^{-(2r-2)} \cdot \left( \frac 12 + \frac 12 \left( \frac {2^{k-2} - 1}{2^{k-2}} \right)^2 \right)
\]
to the argument of the exponential.

We observe that these calculations are identical for every $Z \in \mathcal Z$. Plugging them into the formula of \cref{thm: main} yields the desired result.
\end{proof}

It is worth noting that with \cref{thm: t=2} only requiring $r \geq \frac 12 \log_{\gamma_k} n$, the $\exp(n \gamma_k^{-r})$-term representing clusters of size $1$ can now grow as fast as $\exp(\sqrt n)$. Consequently, the approximately $k \cdot 2^{(k-1)n}$ independent sets with zero defects, depending on $r$, may only account for an exponentially small fraction of all independent sets.

\section{Tools and notation}\label{sec: tools nota}
Before we can begin with the proof of our main theorem, we first need to fix some more notation and collect a few additional tools.

All our arguments will eventually examine the limit as $n \to \infty$. For the sake of brevity, we utilize the Landau notation $o(h(n))$ to denote any function $j(n)$ satisfying $j(n) / h(n) \to 0$ as $n \to \infty$. We also write $j(n) = (1 \pm \rho) h(n)$ to express that $(1 - \rho)h(n) \leq j(n) \leq (1 + \rho)h(n)$ holds.

Unless stated otherwise, $\log$ is the natural logarithm. We omit rounding in our notation when it does not affect the argument.

\subsection{Expansion properties}\label{subsec:expansion}
We observe a few consequences of the expansion properties we introduced in \cref{def: expansion properties}. Firstly, we note that in order for $\mathrm{Exp_1}$ to make sense, we need to assume that $r$ is not too large. The following lemma captures this.

\begin{lemma}\label{lem: ExpM saves KP3}
Let $k \geq 3$ and $0 < \alpha < 1$. Suppose $G$ is an $r$-regular $k$-partite $k$-graph with vertex partition $\mathcal Z$, where each part has order $n \geq 3$. If $G$ satisfies \ExpM{\alpha}, then $r \leq \sqrt {2n}$.
\end{lemma}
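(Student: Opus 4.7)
The plan is to apply $\mathrm{Exp_1}(\alpha)$ to a subset $S$ of one partition class $Z$ whose size is chosen as large as possible, namely $|S| = \min(r,n)$, and then use the trivial upper bound $|N(S)| \leq (k-1)n$ coming from the fact that $N(S)$ lives in the remaining $k-1$ partition classes. Combining these two inequalities gives
\[
	(k-1-\alpha)\,r\,|S| \;\leq\; |N(S)| \;\leq\; (k-1)n,
\]
so $r\,|S| \leq \frac{k-1}{k-1-\alpha}\,n$. Using $\alpha < 1$ together with $k \geq 3$, I would observe the chain $\frac{k-1}{k-1-\alpha} < \frac{k-1}{k-2} \leq 2$, where the last inequality is just $k \geq 3$. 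Hence $r\,|S| < 2n$.

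The argument then splits depending on whether $r \leq n$ or $r > n$. If $r \leq n$, then $|S| = r$ and the inequality becomes $r^2 < 2n$, which yields $r < \sqrt{2n}$ as desired. If on the other hand $r > n$, then $|S| = n$, so the inequality reduces to $r < 2$; combined with $r > n \geq 3$, this is a contradiction, ruling the case out entirely. Thus the bound $r \leq \sqrt{2n}$ holds in the only case that survives.

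There is no real obstacle here: the only point that requires a little care is remembering that $\mathrm{Exp_1}(\alpha)$ is only stated for sets of size at most $r$ (rather than at most $n$), which is why one has to take $|S| = \min(r,n)$ and briefly treat the $r > n$ case separately. Once that is handled, everything else is the bookkeeping inequality $\frac{k-1}{k-2} \leq 2$ for $k \geq 3$, which is exactly what forces the constant $2$ to appear in $\sqrt{2n}$.
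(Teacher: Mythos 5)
Your proof is correct and follows essentially the same route as the paper: take $S$ of size $\min(r,n)$, apply $\mathrm{Exp_1}(\alpha)$, bound $|N(S)|$ by $|\overline Z| = (k-1)n$, use $\alpha < 1$ and $k \geq 3$ to get $\frac{k-1}{k-1-\alpha} \leq \frac{k-1}{k-2} \leq 2$, and then rule out the case $r > n$ by noting it forces $r < 2$, contradicting $r > n \geq 3$. The only cosmetic difference is that you obtain a strict inequality $r < \sqrt{2n}$ because you keep $\alpha < 1$ strict, which is slightly stronger than the stated bound but of course implies it.
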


\begin{proof}
Consider any $Z \in \mathcal Z$ and an arbitrary $S \subset Z$ with $\abs S = \min \{ r, n \}$. Since $N(S) \subset \overline Z$, we observe that \ExpM{\alpha} together with $\alpha < 1$ and $k \geq 3$ implies
\[
	r \min \{ r, n \}
	= r \abs S
	\overset {\mathrm{Exp_1}}\leq \frac {\abs {N(S)}}{k-1-\alpha}
	\overset {\alpha < 1}\leq \frac {\abs {\overline Z}}{k-2}
	= \frac {k-1}{k-2} \cdot n
	\overset {k \geq 3}\leq 2n
	\,.
\]
For $n \leq r$, this simplifies to $r \leq 2$, which contradicts $3 \leq n \leq r$. So $r \leq n$ must hold and we obtain $r^2 \leq 2n$, which implies the desired $r \leq \sqrt {2n}$.
\end{proof}

Secondly, we observe that $\mathrm{Exp_2}$ can be used to establish a lower bound for the relative expansion of sets $S$ even larger than $\beta \frac nr$.

\begin{lemma}\label{lem: ExpL relative expansion}
Let $k \geq 3$, $b \in \NN$, and $\beta > 0$. Suppose $G$ is an $r$-regular $k$-partite $k$-graph with vertex partition $\mathcal Z$, where each part has order $n$. If $G$ satisfies \ExpL{\beta}, then for every $S \subset Z \in \mathcal Z$ with $\abs S \leq b$, we have $\abs {N(S)} \geq \min \left\lbrace r ; \frac {\beta n}b \right\rbrace \abs S$.
\end{lemma}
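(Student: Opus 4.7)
The proof splits based on whether $\abs S$ is below or above the threshold $\beta n/r$ at which \ExpL{\beta} directly applies. Write $s := \abs S$.

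If $s \leq \beta n/r$, property \ExpL{\beta} applies to $S$ itself, giving $\abs{N(S)} \geq (k-2+\beta) r s$. Since $k \geq 3$ and $\beta > 0$, the factor $k-2+\beta$ is at least $1+\beta > 1$, so $\abs{N(S)} \geq rs \geq \min\{r, \beta n/b\} \cdot s$, as desired.

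If instead $s > \beta n/r$, then $b \geq s > \beta n/r$, so $\beta n/b < r$ and $\min\{r, \beta n/b\} = \beta n/b$. The plan is to pick any subset $S' \subset S$ of size $\lfloor \beta n/r \rfloor$, apply \ExpL{\beta} to $S'$, and transfer the resulting bound back to $S$. The crucial structural observation is that $k$-partiteness forces $N_G(T) \subset \overline Z$ for every $T \subset Z$; in particular, for $S' \subset S \subset Z$, the set $N_G(S')$ is disjoint from $S \setminus S'$, yielding the monotonicity $N_G(S') \subset N_G(S)$. Combined with \ExpL{\beta} applied to $S'$, this gives $\abs{N(S)} \geq \abs{N(S')} \geq (k-2+\beta) r \lfloor \beta n/r \rfloor$.

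The main obstacle is verifying that this lower bound exceeds $(\beta n/b) s$. Since $s \leq b$, it suffices to show $(k-2+\beta) r \lfloor \beta n/r \rfloor \geq \beta n$. Using $\lfloor \beta n/r \rfloor \geq \beta n/r - 1$, this reduces to the inequality $(k-3+\beta)\beta n \geq (k-2+\beta) r$, which holds whenever $n$ is sufficiently large compared to $r$. In the regime where this lemma is applied, \Reg{t} ensures $r$ is logarithmic in $n$, so this comparison (and in particular the weaker $\beta n \geq r$ needed to ensure $\lfloor \beta n/r \rfloor \geq 1$ and $S' \neq \emptyset$) holds comfortably, and the proof closes.
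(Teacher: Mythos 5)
Your proof is correct and takes essentially the same approach as the paper: split at the threshold $\abs S = \beta n/r$, and in the large case pass to a subset $S'$ of the threshold size, apply \ExpL{\beta} to $S'$, and use $\abs{N(S)} \geq \abs{N(S')}$. The only difference is that you track the floor in $\abs{S'} = \lfloor \beta n/r \rfloor$ explicitly (and correctly note this needs $n$ large relative to $r$), whereas the paper suppresses this under its stated convention of omitting rounding; your careful justification that $N_G(S') \subset N_G(S)$ via $k$-partiteness is also a nice touch that the paper leaves implicit.
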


\begin{proof}
Let $Z \in \mathcal Z$ and $S \subset Z$ with $\abs S \leq b$ be arbitrary. If $\abs S \leq \beta \frac nr$, we use \ExpL{\beta} directly to obtain $\abs {N(S)} \geq (k-2+\beta)r \abs S \geq r \abs S$. If $\abs S > \beta \frac nr$, we select an arbitrary subset $S' \subset S$ of order $\abs {S'} = \beta \frac nr$. Invoking \ExpL{\beta} for $S'$, we obtain
\[
	\abs {N(S)}
	\geq \abs {N(S')}
	\overset {\mathrm{Exp_2}}\geq (k-2+\beta)r \abs {S'}
	\overset {k \geq 3}\geq r \abs {S'}
	= \beta n
	\overset{\abs S \leq b}\geq \frac {\beta n}{b} \abs S
	\,.
\]
This proves the lemma.
\end{proof}

\subsection{Kotecký-Preiss condition}
When introducing the cluster expansion in (\ref{eq: cluster expansion}), we already hinted at the fact that it can be truncated to obtain a good approximation of the partition function of a polymer model, provided it converges. Establishing said convergence can be achieved via the following statement, also known as the Kotecký-Preiss condition.

\begin{theorem}[Kotecký-Preiss condition~\cite{KP86}]\label{thm: KP}
Consider a polymer model $(\PM, \sim, w)$ and two functions $f, g \colon \PM \to [0, \infty)$. Suppose that for all $S \in \PM$, we have
\[
	\sum_{\substack{T \in \PM \\ T \nsim S}} w(T) \exp \big( f(T) + g(T) \big)
	\leq f(S)
	\,.
\]
Then the cluster expansion of $(\PM, \sim, w)$ converges absolutely. Furthermore, for any $S \in \PM$, we have 
\[
	\sum_{\substack{\Gamma \in \CM_{(\PM, \sim, w)} \\ \Gamma \nsim S}} \abs {w(\Gamma)} \exp \left( \sum_{T \in \Gamma} g(T) \right)
	\leq f(S)
	\,,
\]
where the outer sum is over all clusters $\Gamma \in \CM_{(\PM, \sim, w)}$ that contain at least one polymer $T \in \PM$ that satisfies $T \nsim S$.
\end{theorem}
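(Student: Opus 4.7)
The plan is to establish the quantitative bound, from which absolute convergence of the cluster expansion follows by a standard localisation argument (e.g.\ formally adjoining a polymer incompatible with every other one and applying the bound to it). For each polymer $S \in \PM$, set
\begin{equation*}
\Psi(S) \coloneqq \sum_{\substack{\Gamma \in \CM_{(\PM, \sim, w)} \\ \Gamma \nsim S}} \abs{w(\Gamma)} \exp\Bigl(\sum_{T \in \Gamma} g(T)\Bigr),
\end{equation*}
where the sum is over clusters containing at least one polymer incompatible with $S$. The goal is to show $\Psi(S) \leq f(S)$ for every $S \in \PM$.

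The first step is to reduce clusters to trees. By the Penrose--Ruelle tree-graph inequality, $n! \abs{\phi(H)} \leq \tau(H)$ for every connected graph $H$ on $n$ vertices, where $\tau(H)$ counts spanning trees. Substituting this into the definition \eqref{eq: cluster weights} of $w(\Gamma)$ bounds $\Psi(S)$ from above by a sum over rooted labelled trees $\mathcal{T}$ whose vertices are polymers, whose edges correspond to incompatible pairs, and whose root is some polymer incompatible with $S$. The factor $n!$ cancels the labelling, reducing the problem to a sum over rooted unlabelled trees of polymers, weighted multiplicatively by $w(T')e^{g(T')}$ at each vertex $T'$.

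The second step is an inductive bound on such tree sums. Let $B(T)$ denote the total weight of all rooted trees with root $T$. Decomposing such a tree into its root together with the unordered multiset of subtrees hanging off $T$ yields the functional equation
\begin{equation*}
B(T) = w(T)\, e^{g(T)} \exp\Bigl(\sum_{T' \nsim T} B(T')\Bigr),
\end{equation*}
the exponential arising from $\sum_{j \geq 0} \tfrac{1}{j!}(\cdots)^j$ when one sums over the number $j \geq 0$ of children. By induction on tree depth, combined with the hypothesis of the theorem applied to $T$, this gives $B(T) \leq w(T)\, e^{f(T)+g(T)}$. Summing $B(T)$ over roots $T \nsim S$ and invoking the hypothesis once more for $S$ itself yields the desired $\Psi(S) \leq f(S)$.

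The main obstacle lies in the opening step: since clusters are ordered tuples of not-necessarily-distinct polymers, applying the tree-graph inequality to $H_\Gamma$ requires careful bookkeeping to ensure that the multiplicities of repeated polymers line up correctly with the factorials coming from the Ursell function and from the labelling of tree vertices. Once this combinatorial accounting is settled, the recursion in the second step is essentially forced by the shape of the Kotecký--Preiss hypothesis, and the monotone convergence needed to handle infinite polymer sets is routine.
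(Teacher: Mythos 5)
The paper does not prove this theorem; it is cited verbatim from Kotecký and Preiss~\cite{KP86} (with the cluster-expansion formalism set up via~\cite{FV17}), so there is no in-paper proof to compare against. Your sketch is a correct outline of the now-standard modern argument: bound the Ursell function by the Penrose/Rota tree-graph inequality $\abs{V(H)}!\,\abs{\phi(H)}\le\tau(H)$, pass from clusters to rooted trees of polymers via the exponential formula, and close the resulting tree series with a monotone recursion $B(T)\le w(T)\,e^{f(T)+g(T)}$ whose inductive step is exactly the Kotecký--Preiss hypothesis applied at $T$; summing over roots $T\nsim S$ and invoking the hypothesis once more at $S$ gives $\Psi(S)\le f(S)$. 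This is a genuinely different organisation from the original 1986 proof (which argued more directly by induction on cluster depth without invoking tree-graph bounds), but it is the exposition found in most modern references and it does establish the theorem as stated. Two minor points are worth making precise. The ``localisation'' needed for absolute convergence is cleaner stated directly: every cluster $\Gamma$ satisfies $\Gamma\nsim T$ for some $T\in\Gamma$, so $\sum_{\Gamma}\abs{w(\Gamma)}\le\sum_{T\in\PM}f(T)$, which is finite here since the polymer set $\PM_{Z,b(n)}$ is finite. And the bookkeeping you flag as the main obstacle is indeed the delicate part: after replacing $\tau(H_\Gamma)/\abs\Gamma!$ by a sum over labelled spanning trees, one should use the union bound $\mathbf 1[\exists i\colon S_i\nsim S]\le\sum_i\mathbf 1[S_i\nsim S]$ (harmless, since only an upper bound is sought) and then relabel so the root is pinned at index $1$, turning the $1/\abs\Gamma!$ into $1/(\abs\Gamma-1)!$; the recursion then follows from decomposing the rooted labelled tree into the root together with the set partition of the remaining labels into subtrees, which is exactly the setting of the exponential formula. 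With that spelled out, your plan goes through.
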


Note that this condition is easiest to satisfy for $g \equiv 0$. When using non-zero~$g$, however, \cref{thm: KP} provides a tail bound, which will be useful for our proof of \cref{thm: main}.

\subsection{2-linkedness}
Since we use this observation throughout the paper, it is worth noting here that when starting from a fixed vertex, the number of $2$-linked sets containing it does not grow too fast. In order to formalize this, we require the following observation from~\cite{Gal19}.

\begin{lemma}[{\cite{Gal19}, see Lemma~2.2}]\label{lem: bounded max deg fixed size}
Let $H$ be a graph of maximum degree $\Delta$, $v \in V(H)$, and $s \in \NN$. Then there are at most $e(e\Delta)^{s-1}$ vertex subsets $S \subset V(H)$ with $v \in S$ and $\abs S = s$ such that $H[S]$ is connected.
\end{lemma}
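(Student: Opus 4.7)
The plan is to inject the connected subsets of size $s$ containing $v$ into the rooted subtrees of a universal branching tree, and then bound the resulting count via a Fuss--Catalan-type enumeration combined with a Stirling estimate.

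Concretely, for each connected $S \subset V(H)$ with $v \in S$ and $\abs S = s$, I would first fix an arbitrary spanning tree $T_S$ of $H[S]$ rooted at $v$, and once and for all fix an injective labelling $\{1, \ldots, \Delta\}$ of the (at most $\Delta$) edges incident to each vertex of $H$. This labelling then lets me embed $T_S$ into the infinite complete $\Delta$-ary tree $\widetilde T_\Delta$ by sending $v$ to the root of $\widetilde T_\Delta$ and, inductively, sending each child $c$ of a vertex $u$ in $T_S$ to the $i$-th child of the image of $u$, where $i$ is the label of the edge $uc$ at $u$. Projecting any such embedded subtree back to $H$ using the same labelling uniquely recovers the vertex set $S$, so the map $S \mapsto (\text{image of } T_S)$ is injective into the rooted subtrees of $\widetilde T_\Delta$ of size $s$.

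The task thus reduces to counting rooted subtrees of $\widetilde T_\Delta$ of size $s$ containing the root. A standard Lagrange-inversion computation applied to the functional equation $F(x) = x(1 + F(x))^\Delta$ for the corresponding generating function shows that this count equals exactly $\frac{1}{s}\binom{\Delta s}{s-1}$. The Stirling lower bound $(s-1)! \geq ((s-1)/e)^{s-1}$ together with the elementary estimate $(s/(s-1))^{s-1} \leq e$ then yields
\[
    \binom{\Delta s}{s-1}
    \leq \frac{(\Delta s)^{s-1}}{(s-1)!}
    \leq (e\Delta)^{s-1} \left( \frac{s}{s-1} \right)^{s-1}
    \leq e (e\Delta)^{s-1},
\]
so that $\frac{1}{s}\binom{\Delta s}{s-1} \leq \frac{e(e\Delta)^{s-1}}{s} \leq e(e\Delta)^{s-1}$, as claimed (with the case $s = 1$ being trivial).

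The only genuine subtlety is making the embedding into $\widetilde T_\Delta$ injective across different choices of $S$. The key is committing in advance both to the edge-labelling and to a canonical spanning tree $T_S$ per subset, which together ensure that the labelled image subtree in $\widetilde T_\Delta$ uniquely determines its projection to $H$, namely $S$ itself. Everything else is routine generating-function and Stirling manipulation, so I would not expect any serious technical obstruction beyond bookkeeping.
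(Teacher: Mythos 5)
The proposal is correct and takes essentially the same approach as the paper: injecting connected $s$-subsets through $v$ into rooted $s$-node subtrees of the infinite $\Delta$-ary tree and bounding the resulting Fuss--Catalan count $\tfrac{1}{s}\binom{\Delta s}{s-1}$ via the standard $\binom{n}{k} \leq (en/k)^k$ Stirling-type estimate. You merely spell out the injection and the Lagrange-inversion step that the paper delegates to Galvin's Lemma~2.2.
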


\begin{proof}
Note that the statement trivially holds for $s = 1$, so we may assume $s \geq 2$. As pointed out in~\cite{Gal19}, the number of vertex subsets that contain $v$, have order~$s$, and induce a connected subgraph can be bounded from above by the number of rooted subtrees of the infinite $\Delta$\nobreakdash-branching rooted tree that have order $s$. This number is
\[
	\frac {\binom{\Delta s}{s}} {(\Delta - 1)s + 1}
	= \frac {\binom{\Delta s}{s-1}} s
	\leq \binom{\Delta s}{s-1}
	\leq \left( e \frac {\Delta s}{s-1} \right)^{s-1}
	= (e\Delta)^{s-1} \left( 1 + \frac 1{s-1} \right)^{s-1}
	\leq e(e\Delta)^{s-1}
	\,,
\]
which finishes the proof.
\end{proof}

Applied to $2$-linkedness and our setting, this implies the following.

\begin{lemma}\label{lem: 2-linked fixed size}
Let $k \geq 3$ and $G$ be an $r$-regular $k$-partite $k$-graph. Then for every partition class~$Z$, every vertex $v \in Z$, and every $s \in \NN$, there are at most $e((k-1)er^2)^{s-1}$ $2$-linked sets $S \subset Z$ with $v \in S$ and $\abs S = s$.
\end{lemma}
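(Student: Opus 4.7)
The strategy is to reduce the statement directly to \cref{lem: bounded max deg fixed size} applied to the auxiliary graph $Z_2$ defined in \cref{subsec: polymer model}. Recall that a set $S \subset Z$ is $2$-linked exactly when $Z_2[S]$ is connected, so once we bound the maximum degree of $Z_2$, the lemma follows immediately.

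First I would estimate the maximum degree $\Delta(Z_2)$. Fix $v \in Z$. Since $G$ is $r$-regular and $k$-partite, the vertex $v$ lies in exactly $r$ edges, each contributing $k-1$ vertices outside of $Z$. Hence $\abs{N_G(\{v\})} \leq (k-1)r$. For each $x \in N_G(\{v\})$, the vertex $x$ is in turn contained in exactly $r$ edges of $G$, and since $G$ is $k$-partite, each such edge contains a single vertex of $Z$. Therefore the number of $u \in Z$ with $x \in N_G(\{u\})$ is at most $r$. Combining these two bounds, every neighbour of $v$ in $Z_2$ (that is, every $u \in Z$ such that $N_G(\{v\}) \cap N_G(\{u\}) \neq \emptyset$) arises by first choosing some $x \in N_G(\{v\})$ and then some $u \in Z$ sharing an edge with $x$, which yields $\Delta(Z_2) \leq (k-1)r \cdot r = (k-1)r^2$.

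Applying \cref{lem: bounded max deg fixed size} with $H = Z_2$, the fixed vertex $v$, and $\Delta = (k-1)r^2$ gives that the number of subsets $S \subset Z$ with $v \in S$, $\abs S = s$, and $Z_2[S]$ connected is at most $e(e \Delta)^{s-1} = e((k-1)er^2)^{s-1}$, which is exactly the bound on the number of $2$-linked sets we want.

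There is no real obstacle here; the only place requiring a moment of care is the degree estimate, where one has to avoid double-counting and use $k$-partiteness to guarantee that each edge through $x \in N_G(\{v\})$ meets $Z$ in exactly one vertex. Everything else is an immediate invocation of the already-stated Galvin-type bound.
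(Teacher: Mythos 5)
Your proof is correct and follows essentially the same route as the paper: bound $\Delta(Z_2) \leq (k-1)r^2$ by the two-step neighbour count, then invoke \cref{lem: bounded max deg fixed size} on $Z_2$ and use the definition of $2$-linkedness. The degree estimate and the reduction are exactly as in the paper's argument.
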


\begin{proof}
Note that in $G$, every $v \in Z$ has at most $(k-1)r$ neighbours, all of which belong to $\overline Z$. Also note that every $x \in \overline Z$ has at most $r$ neighbours in $Z$. This shows that the maximum degree of $Z_2$ is at most $(k-1)r^2$. Hence, \cref{lem: bounded max deg fixed size} guarantees that there are at most $e((k-1)er^2)^{s-1}$ subsets $S \subset V(Z_2)$ with $v \in S$ and $\abs S = s$ such that $Z_2[S]$ is connected. By definition, these are exactly the $2$-linked sets in question.
\end{proof}

\section{Proof of the main theorem}\label{sec: proof main}
Our first observation is how the partition function of the polymer model introduced in \cref{sec: state main} relates to the number of independent sets in question. We require an easy observation from~\cite{Gal19} about sums of binomial coefficients.

\begin{lemma}[\cite{Gal19}]\label{lem: sums of bincoeffs}
Let $b(n)$ be a function with $b(n) = o(n)$. Then there is $n_0 \in \NN$ such that for every $n \geq n_0$,
\[
	\sum_{j = 0}^{b(n)} \binom{n}{j}
	\leq 2^{3b(n)\log \frac n{b(n)} }
	\,.
\]
\end{lemma}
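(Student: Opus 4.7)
My plan is to reduce to the well-known tail estimate
\[
\sum_{j=0}^{b} \binom{n}{j} \leq \left(\frac{en}{b}\right)^b
\qquad (1 \leq b \leq n),
\]
which follows in two lines from the binomial theorem: since $b/n \leq 1$, one has $(b/n)^b \leq (b/n)^j$ for $j \leq b$, hence $(b/n)^b \sum_{j=0}^{b} \binom{n}{j} \leq \sum_{j=0}^{n} \binom{n}{j}(b/n)^j = (1+b/n)^n \leq e^b$, and rearranging gives the bound. This reduces the lemma to a single inequality between the logarithms of the two sides.

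Taking the natural logarithm of both estimates, it suffices to show
\[
b\bigl(1 + \log(n/b)\bigr) \leq 3b \log(n/b) \cdot \log 2.
\]
Assuming $b = b(n) \geq 1$ (the case $b(n) = 0$ gives $1 \leq 1$ under the convention $0 \log 0 = 0$, so it is trivial), I would divide by $b$ and rearrange to $1 \leq (3\log 2 - 1)\log(n/b)$. Since $3\log 2 - 1 > 1$ and since the hypothesis $b(n) = o(n)$ forces $\log(n/b(n)) \to \infty$, this inequality is satisfied for all sufficiently large $n$, which yields the required threshold $n_0$.

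There is no real obstacle here; the only mild subtlety is that the factor $3$ on the right-hand side is chosen precisely to provide enough slack — namely, $3\log 2 - 1 > 0$ — to absorb the additive constant $1$ coming from the factor $e$ in the estimate $(en/b)^b$. In fact, any constant strictly greater than $1/\log 2 \approx 1.443$ would work in place of $3$, so the statement is comfortable asymptotically.
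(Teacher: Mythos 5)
The paper attributes this lemma to~\cite{Gal19} and gives no proof of its own, so there is no internal argument to compare against. Your self-contained proof is correct: the standard tail bound $\sum_{j\leq b}\binom{n}{j}\leq(en/b)^b$ is derived exactly as you write, and the resulting logarithmic inequality $1 \leq (3\log 2 - 1)\log(n/b)$ holds once $n/b(n)$ exceeds the absolute constant $e^{1/(3\log 2 - 1)}$, which the hypothesis $b(n)=o(n)$ guarantees for $n\geq n_0$. Your separate treatment of the degenerate case $b(n)=0$ and your remark that any constant larger than $1/\log 2$ would do in place of $3$ are both accurate.
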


Using this, we can bound $\abs {\II(G)}$ in terms of the partition function $\Xi_{Z,b(n)}$.

\begin{lemma}\label{lem: split into polymer models}
Let $k \geq 3$, $t \in \NN$, and $\beta >0$. Furthermore, let $b(n)$ be a function with $b(n) \geq \frac {\beta t n}{\log_{\gamma_k} n}$ and $b(n) = o(n)$. Then there is $n_0 \in \NN$ such that the following holds for every $n \geq n_0$:

Let $G$ be an $r$-regular $k$-partite $k$-graph with vertex partition $\mathcal Z$, where each part has order $n$. If $G$ satisfies \Reg{t}, \ExpL{\beta}, and \Def{b(n)}, then
\begin{align*}
	2^{(k-1)n} \sum_{Z \in \mathcal Z} \Xi_{Z,b(n)} - k^3 \cdot 2^{(k-\frac 32)n}
	\leq \abs {\II(G)}
	\leq 2^{(k-1)n} \sum_{Z \in \mathcal Z} \Xi_{Z,b(n)}
	\,.
\end{align*}
\end{lemma}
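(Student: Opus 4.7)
The plan is to interpret $2^{(k-1)n} \Xi_{Z,b(n)}$ combinatorially as the number of independent sets whose intersection with $Z$ consists of ``small'' $2$-linked components, and to then compare with $\abs{\II(G)}$ using \Def{b(n)} and inclusion--exclusion over the partition classes.

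For the combinatorial interpretation, the compatibility condition $N_G(S) \cap N_G(T) = \emptyset$ forces distinct polymers to be disjoint in $Z$ and to lie in separate components of $Z_2$. Hence $\mathcal S \mapsto \bigcup \mathcal S$ is a bijection between compatible families $\mathcal S \subset \PM_{Z,b(n)}$ and subsets $D \subset Z$ whose $2$-linked components in $Z_2$ all have size at most $b(n)$. Since these components have disjoint $G$-neighbourhoods, $L_G(D)$ splits as a disjoint union of the $L_G(S_i)$'s, giving $\prod_{S \in \mathcal S} w(S) = \abs{\II(L_G(D))} \cdot 2^{-\abs{N_G(D)}}$. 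Moreover, for any fixed $D \subset Z$, the number of $I \in \II(G)$ with $I \cap Z = D$ equals $\abs{\II(L_G(D))} \cdot 2^{(k-1)n - \abs{N_G(D)}}$, since an extension $J = I \cap \overline Z$ decomposes into $J \cap N_G(D) \in \II(L_G(D))$ and an arbitrary $J \cap (\overline Z \setminus N_G(D))$ (every edge of $G$ has a unique $Z$-vertex, which is either in $D$ and so handled by $L_G(D)$ or automatically outside $I$). Combining these yields
\[
    2^{(k-1)n}\Xi_{Z,b(n)} = \abs{X_Z}, \qquad X_Z := \{I \in \II(G) : \text{every $2$-linked component of $I \cap Z$ has size} \leq b(n)\}.
\]

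By \Def{b(n)}, every $I \in \II(G)$ lies in $X_Z$ for some $Z$, so $\II(G) = \bigcup_Z X_Z$ and the upper bound $\abs{\II(G)} \leq \sum_Z \abs{X_Z}$ is immediate. For the lower bound, $X_Z \subset \II(G)$ gives $\abs{\bigcup_Z X_Z} = \abs{\II(G)}$, so Bonferroni reduces the task to bounding $\sum_{Z_1 \neq Z_2} \abs{X_{Z_1} \cap X_{Z_2}}$. The crucial observation is that cell-lightness already forces $\abs{I \cap Z} = o(n)$: applying \cref{lem: ExpL relative expansion} with $b = b(n)$ to each $2$-linked component of $D := I \cap Z$ and summing across their disjoint $G$-neighbourhoods gives $\abs{N_G(D)} \geq \min\{r,\beta n/b(n)\}\abs{D}$; the hypothesis $b(n) \geq \beta tn/\log_{\gamma_k} n$ combined with \Reg{t} yields $\beta n/b(n) \leq r$, and comparing the resulting $\abs{N_G(D)} \geq (\beta n/b(n))\abs{D}$ with $\abs{N_G(D)} \leq (k-1)n$ forces $\abs{D} \leq (k-1)b(n)/\beta$.

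Specifying an element of $X_{Z_1} \cap X_{Z_2}$ thus amounts to picking $I \cap Z_1$ and $I \cap Z_2$ (each a subset of size at most $(k-1)b(n)/\beta$, giving at most $2^{O(b(n)\log(n/b(n)))} = 2^{o(n)}$ choices by \cref{lem: sums of bincoeffs}) together with $I$ on the remaining $(k-2)n$ vertices (at most $2^{(k-2)n}$ choices). This gives $\abs{X_{Z_1} \cap X_{Z_2}} \leq 2^{(k-2)n + o(n)} \leq 2^{(k-3/2)n}$ for $n$ sufficiently large, and summing over the $\binom{k}{2}$ pairs bounds the total error by $k^3 \cdot 2^{(k-3/2)n}$ as required. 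The main obstacle is the size bound on $I \cap Z$ for cell-light $I$: naively, cell-lightness only constrains individual components, but because distinct $2$-linked components have disjoint $G$-neighbourhoods, the expansion lemma sums across them to yield a global bound on $\abs{I \cap Z}$.
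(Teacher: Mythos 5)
Your proposal is correct and follows essentially the same route as the paper: both identify $2^{(k-1)n}\Xi_{Z,b(n)}$ with the count of independent sets whose intersection with $Z$ has only small $2$-linked components, both use the disjointness of component neighbourhoods together with \cref{lem: ExpL relative expansion} and the hypothesis $b(n) \geq \beta t n / \log_{\gamma_k} n$ to force $\abs{I\cap Z} = O(b(n))$, and both control the overcount via the pairwise intersections $X_{Z_1}\cap X_{Z_2}$ using \cref{lem: sums of bincoeffs}. The only cosmetic difference is that you invoke the Bonferroni inequality explicitly, whereas the paper notes that each independent set has at most $k$ defect classes and multiplies its bound on the number of sets with $\geq 2$ defect classes by $k$; both yield the stated error term $k^3\cdot 2^{(k-3/2)n}$.
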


\begin{proof}
For $Z \in \mathcal Z$ and $I \in \II(G)$, we can regard $I \cap Z$ as a vertex subset of $Z_2$. We say that~$Z$ is a \emph{defect class} for $I$ if every connected component of $Z_2[I \cap Z]$ has order at most $b(n)$. In particular, \Def{b(n)} guarantees that every $I \in \II(G)$ has at least one defect class, so
\begin{align}\label{eq: count indepsets by defect classes}
	\abs {\II(G)}
	\leq \sum_{Z \in \mathcal Z} \abs { \{ I \in \II(G) \colon \text{$Z$ is a defect class for $I$} \} }
	\,.
\end{align}
The following two things remain to show: Firstly, we argue that the number of $I \in \II(G)$ with more than one defect class is at most $k^2 \cdot 2^{(k-\frac 32)n}$. Since every $I$ has at most $k$ defect classes, this implies that (\ref{eq: count indepsets by defect classes}) overcounts $\abs {\II(G)}$ by at most $k^3 \cdot 2^{(k - \frac 32)n}$. Secondly, we prove that for every $Z \in \mathcal Z$,
\begin{align}\label{eq: relationship indepset polymers}
	\abs { \{ I \in \II(G) \colon \text{$Z$ is a defect class for $I$} \} }
	= 2^{(k-1)n} \Xi_{Z,b(n)}
	\,.
\end{align}
This then implies the assertion.

In order to limit the number of $I \in \II(G)$ with multiple defect classes, we first establish the following observation.

\begin{claim}\label{claim: intersection with defect class}
If $Z \in \mathcal Z$ is a defect class for $I \in \II(G)$, then $\abs {I \cap Z} \leq \frac k \beta b(n)$.
\end{claim}

\begin{proofclaim}
First note that $b(n) \geq \frac {\beta tn}{\log_{\gamma_k} n}$ and \Reg{t} guarantee that
\begin{align}\label{eq: b/n beats 1/r}
	\frac {\beta n}{b(n)}
	\leq \frac 1t \log_{\gamma_k} n
	\leq r
	\,.
\end{align}
Next, consider the vertex set $S$ of an arbitrary connected component of $Z_2[I \cap Z]$, where $Z$ is a defect class for $I$. Then $\abs S \leq b(n)$ and thus by \cref{lem: ExpL relative expansion} and (\ref{eq: b/n beats 1/r}), $\abs {N_G(S)} \geq \frac {\beta n}{b(n)} \abs S$. Now let $C_2(I \cap Z)$ be the set consisting of the vertex sets of all connected components of $Z_2[I \cap Z]$. Since these components are pairwise disconnected in $Z_2[I \cap Z]$, the neighbourhoods $N_G(S)$ of the $S \in C_2(I \cap Z)$ must be pairwise disjoint. We obtain
\begin{align*}
	\abs {N_G(I \cap Z)}
	= \sum_{S \in C_2(I \cap Z)} \abs {N_G(S)}
	\geq \frac {\beta n}{b(n)} \sum_{S \in C_2(I \cap Z)} \abs S
	= \frac {\beta n}{b(n)} \abs {I \cap Z}
	\,.
\end{align*}
However, $\abs {N_G(I \cap Z)} \leq \abs {V(G)} = kn$, so $\abs {I \cap Z} \leq \frac k \beta b(n)$ must hold. This proves the claim.
\end{proofclaim}

This claim allows us to bound the number of $I \in \II(G)$ with multiple defect classes from above by the number of sets $J \subset V(G)$ that intersect at least two distinct $Z, Z' \in \mathcal Z$ in $\abs {J \cap Z}, \abs {J \cap Z'} \leq \frac k \beta b(n)$ vertices. This can be done by a straightforward counting argument: Firstly, there are $\binom{k}{2}$ possibilities to select two distinct partition classes $Z, Z' \in \mathcal Z$. Next, there are $\sum_{j = 0}^{\frac k \beta b(n)} \binom{n}{j}$ possibilities to choose $J \cap Z$ and $J \cap Z'$, respectively. Finally, $J$ is completely specified by picking any of the $2^{(k-2)n}$ subsets of $\overline{Z \cup Z'}$ as $J \cap \overline{Z \cup Z'}$. The number of independent sets with multiple defect classes is therefore at most
\[
	\binom{k}{2} \left( \sum_{j = 0}^{\frac k \beta b(n)} \binom{n}{j} \right)^2 2^{(k-2)n}
	\,.
\]
According to \cref{lem: sums of bincoeffs}, $b'(n) \coloneqq \frac k \beta b(n) = o(n)$ implies that
\[
	\sum_{j = 0}^{b'(n)} \binom{n}{j}
	\leq 2^{3b'(n) \log \frac n{b'(n)}}
	= 2^{n \left( 3 \frac {b'(n)}n \log \frac n{b'(n)} \right)}
	\,.
\]
Moreover, $b'(n) = o(n)$ guarantees that $\frac n{b'(n)}$ becomes arbitrarily large with increasing $n$, so $3 \frac {b'(n)}n \log \frac n{b'(n)} \leq \frac 14$ for sufficiently large $n$. It follows that the number of $I \in \II(G)$ with multiple defect classes is bounded from above by
\[
	\binom{k}{2} \left( \sum_{j = 0}^{b'(n)} \binom{n}{j} \right)^2 2^{(k-2)n}
	\leq k^2 \cdot \left( 2^{\frac n4} \right)^2 \cdot 2^{(k-2)n}
	= k^2 \cdot 2^{(k- \frac 32)n}
	\,.
\]
	
It remains to establish (\ref{eq: relationship indepset polymers}). We begin by fixing an arbitrary partition class $Z \in \mathcal Z$ and set $\II_Z(G) \coloneqq \{ I \in \II(G) \colon \text{$Z$ is a defect class for $I$} \}$. We call a set $T \subset Z$ a \emph{defect set} if every connected component of $Z_2[T]$ has order at most $b(n)$. This means that $Z$ is a defect class for $I$ if and only if $I \cap Z$ is a defect set. For $T \subset Z$, we now write $\II_Z^T(G) \coloneqq \{ I \in \II_Z(G) \colon I \cap Z = T \}$ and conclude that
\begin{align}\label{eq: split indepsets by intersection with defect class}
	\abs {\II_Z(G)} 
	= \sum_{\substack{T \subset Z\\\text{$T$ defect set}}} \abs {\II_Z^T(G)}
	\,.
\end{align}

In order to examine $\abs {\II_Z^T(G)}$, we observe that when completing a given defect set $T \subset Z$ to a set $I \in \II_Z^T(G)$, a vertex $v \in \overline Z$ has to be treated one of two ways:
\begin{itemize}
	\item If $v \notin N_G(T)$, we are free to select $v$ independently from all other choices we make because for all edges $e \in E(G)$ that contain $v$, the vertex in $e \cap Z$ does not belong to $T$ and thus not to $I$.
	\item If $v \in N_G(T)$, selecting $v$ prevents us from selecting any set of $k-2$ vertices in $N_G(T)$ that forms an edge with $v$ and any vertex in $T$. This means that the intersection of $I$ with $N_G(T)$ has to be an independent set in the $(k-1)$-graph $L_G(T)$.
\end{itemize}
This shows that there are exactly 
\begin{align}\label{eq: complete intersection with defect class to indepset}
	\abs {\II_Z^T(G)} 
	= \abs {\II(L_G(T))} \cdot 2^{\abs {\overline Z \setminus N_G(T)}} 
	= \abs {\II(L_G(T))} \cdot 2^{(k-1)n - \abs {N_G(T)}}
\end{align}
ways to complete a given defect set $T \subset Z$ to $I \in \II_Z^T(G)$. We find that
\begin{align} \label{eq: partition II(G)}
	\abs {\II_Z(G)}
	\overset{(\ref{eq: split indepsets by intersection with defect class})}= \sum_{\substack{T \subset Z\\\text{$T$ defect set}}} \abs {\II_Z^T(G)}
	\overset{(\ref{eq: complete intersection with defect class to indepset})}= 2^{(k-1)n} \sum_{\substack{T \subset Z\\\text{$T$ defect set}}} \abs {\II(L_G(T))} \cdot 2^{-\abs {N_G(T)}}
	\,.
\end{align}

Next, consider the defect set $T \subset Z$ as a vertex subset of $Z_2$ and let $C_2(T)$ be the set consisting of the vertex sets of all connected components of $Z_2[T]$. This means that $N_G(T)$ is the disjoint union of all $N_G(S)$ with $S \in C_2(T)$ and consequently, $L_G(T)$ is the disjoint union of all $L_G(S)$ with $S \in C_2(T)$. In particular, we obtain
\[
	\abs {\II(L_G(T))} \cdot 2^{-\abs {N_G(T)}} 
	= \prod_{S \in C_2(T)} \abs {\II(L_G(S))} \cdot 2^{-\abs {N_G(S)}}
	\,.
\]
	
Note that by construction, every $S \in C_2(T)$ is $2$-linked and non-empty. Since $T$ is a defect set, we moreover have $\abs S \leq b(n)$ for all $S \in C_2(T)$ and conclude that each $S \in C_2(T)$ is a polymer in $\PM_{Z,b(n)}$. We recall that 
\begin{align*}\tag{\ref{eq: polymer weights}}
	w(S) 
	= \abs {\II(L_G(S))} \cdot 2^{-\abs {N_G(S)}}
	\,.
\end{align*}
Again by construction, the set $C_2(T)$ of polymers is compatible. Instead of summing over all defect sets $T \subset Z$, we can therefore sum over all compatible $\mathcal S \subset \PM_{Z,b(n)}$. Conversely, every compatible $\mathcal S \subset \PM_{Z,b(n)}$ corresponds to the defect set $T \coloneqq \bigcup_{S \in \mathcal S} S \subset Z$. This shows that
\begin{align}\label{eq: from defect sets to partition function}
	\sum_{\substack{T \subset Z\\\text{$T$ defect set}}} \abs {\II(L_G(T))} \cdot 2^{-\abs {N_G(T)}}
	= \sum_{\substack{\mathcal S \subset \PM_Z\\\text{$\mathcal S$ compatible}}} \prod_{S \in \mathcal S} w(S)
	\overset{(\ref{eq: partition function})}= \Xi_{Z,b(n)}
	\,.
\end{align}
In total, we obtain
\[
	\abs {\II_Z(G)}
	\overset{(\ref{eq: partition II(G)})}= 2^{(k-1)n} \sum_{\substack{T \subset Z\\\text{$T$ defect set}}} \abs {\II(L_G(T))} \cdot 2^{-\abs {N_G(T)}}
	\overset{(\ref{eq: from defect sets to partition function})}= 2^{(k-1)n} \Xi_{Z,b(n)}
	\,,
\]
which establishes (\ref{eq: relationship indepset polymers}) and thus finishes the proof.
\end{proof}

The main technical ingredient in our proof is verifying the Kotecký-Preiss condition (see \cref{thm: KP}) for this polymer model. We will choose the functions $f, g$ for $S \in \PM_{Z,b(n)}$ as follows.
\begin{align}\label{eq: g}\notag
	f(S) 
	&\coloneqq \frac {(k-1)\abs S}r \\
	g(S)
	&\coloneqq \log \gamma_k \cdot r \log (2 \abs S)
	\,.
\end{align}
Note that $\gamma_k = \frac {2^{k-1}}{2^{k-1}-1} > 1$ and polymers $S \in \PM_{Z,b(n)}$ are non-empty, so $g$ is strictly positive.

\begin{lemma}\label{lem: KP condition holds}
Let $k \geq 3$, $t \in \NN$, and $\beta > 0$. Furthermore, let $b(n)$ be a function with $b(n) = o(n)$. Then there is $n_0 \in \NN$ such that the following holds for every $n \geq n_0$:

Let $G$ be an $r$-regular $k$-partite $k$-graph with vertex partition $\mathcal Z$, where each part has order $n$. If $G$ satisfies \Reg{t}, \ExpM{\alpha_{k,t}}, and \ExpL{\beta}, then for every $u \in Z \in \mathcal Z$,
\[
	\sum_{\substack{S \in \PM_{Z,b(n)} \\ S \ni u}} w(S) \exp \big( f(S) + g(S) \big)
	= \sum_{\substack{S \in \PM_{Z,b(n)} \\ S \ni u}} w(S) \exp \left( \frac {(k-1) \abs S}r + \log \gamma_k \cdot r \log (2 \abs S) \right)
	\leq \frac 1{r^3}
	\,.
\]
\end{lemma}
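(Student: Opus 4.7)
The plan is to group polymers by their order $s = |S|$ and estimate the contribution of each group via three ingredients: (a) the count of $2$-linked sets containing a fixed vertex from \cref{lem: 2-linked fixed size}; (b) an upper bound on the polymer weight $w(S)$ obtained from the expansion hypotheses; and (c) the explicit form $\exp\bigl(f(S)+g(S)\bigr) = e^{(k-1)s/r}(2s)^{r\log\gamma_k}$.

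The main analytic step is (b). I will exploit that $w(S)$ equals the probability that a uniformly random subset $X \subseteq N_G(S)$ avoids every edge of the link $(k-1)$-graph $L_G(S)$. Given any matching $M$ in $L_G(S)$, the avoidance events for distinct edges of $M$ are mutually independent, each with probability $1 - 2^{-(k-1)} = \gamma_k^{-1}$, so $w(S) \leq \gamma_k^{-\nu(L_G(S))}$, where $\nu$ denotes matching number. Since $L_G(S)$ carries at most $r|S|$ edges on at least $(k-1-\alpha_{k,t})r|S|$ vertices by expansion (I use \ExpM{\alpha_{k,t}} for $|S| \leq r$, \ExpL{\beta} for $r < |S| \leq \beta n/r$, and \cref{lem: ExpL relative expansion} for $|S| > \beta n/r$), the average degree of $L_G(S)$ is close to $1$, and a careful greedy matching argument produces $\nu(L_G(S)) \geq (1-\varepsilon)r|S|$ for some small $\varepsilon$ controlled by $\alpha_{k,t}$ and $\beta$. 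This gives the key bound $w(S) \leq \gamma_k^{-(1-\varepsilon)r|S|}$.

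Combining (a)--(c), the contribution of polymers of order $s$ containing $u$ is at most
\[
e\bigl((k-1)er^2\bigr)^{s-1} \cdot e^{(k-1)s/r} \cdot (2s)^{r\log\gamma_k} \cdot \gamma_k^{-(1-\varepsilon)rs}.
\]
The decisive factor $\gamma_k^{-(1-\varepsilon)rs}$, together with $\gamma_k^r \geq n^{1/t}$ supplied by \Reg{t}, crushes the combinatorial factors. I expect the first clause of the definition of $\alpha_{k,t}$ in \eqref{eq: alpha0} to be precisely what makes the $s=1$ term at most $1/r^3$: there the factor $2^{r\log\gamma_k}$ must be beaten, leaving a residual $\gamma_k^{-r(1-\log 2 -\varepsilon)}$, and the stated value of $\alpha_{k,t}$ guarantees the exponent stays strictly positive. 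The second clause should ensure geometric decay of the summand in $s$, so that summing over $s$ from $1$ to $b(n)$ remains controlled by the $s=1$ term.

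The principal obstacle is establishing the sharp matching lower bound $\nu(L_G(S)) \geq (1-\varepsilon)r|S|$, particularly in the small-$|S|$ regime where the expansion slack per polymer vertex is relatively largest; this is where the slightly intricate form of $\alpha_{k,t}$ in \eqref{eq: alpha0} becomes essential rather than an arbitrary small constant. A further subtlety is that \cref{lem: ExpM saves KP3} only allows $r \leq \sqrt{2n}$, so I must check that the combinatorial factor $((k-1)er^2)^{s-1}$ cannot exceed the exponential decay even when $r$ is as large as $\sqrt{2n}$ --- but since $\gamma_k^{-r}$ is then super-polynomial in $n$, this should follow straightforwardly.
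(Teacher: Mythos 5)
Your high-level structure (group by $s=|S|$, bound polymer counts by \cref{lem: 2-linked fixed size}, bound $w(S)$ via expansion, combine with the explicit $f,g$) is sound, and your matching-based weight estimate is a legitimate alternative to the paper's small-$s$ argument. In fact, a slightly sharper version of \cref{lem: expansion guarantees linear matching} (observe that edges of a maximum matching $M$ contribute nothing to $N(S)\setminus V(M)$, so $|N(S)\setminus V(M)|\le (k-2)(|E(L_G(S))|-|M|)$) does yield $\nu(L_G(S))\ge (1-\alpha_{k,t})r|S|$ when $|S|\le r$, which is strong enough to handle $s=1$ given $\alpha_{k,t}<1-\log 2$. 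However, two of your claims do not hold and one of them is fatal.

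First, the uniform bound $\nu(L_G(S))\ge (1-\varepsilon)r|S|$ is false once $|S|>r$, because there you only have \ExpL{\beta}-level expansion $|N(S)|\ge(k-2+\beta)r|S|$, which after the same computation gives at best $\nu\ge \beta r|S|$ (not $(1-\varepsilon)r|S|$), and once $|S|>\beta n/r$ the matching number is bounded above by $|N(S)|/(k-1)\le n$, i.e.\ strictly smaller than $r|S|$, so $\nu=\Theta(n)$ rather than $\Theta(r|S|)$. The first weakening is not fatal since $\gamma_k^{-\beta r s}$ still decays; the second one is. (Also, as a side note, the first clause of $\alpha_{k,t}$ in \eqref{eq: alpha0} plays no role in proving \cref{lem: KP condition holds} — the paper only invokes the second clause there, in \eqref{eq: exponent is negative}; the first clause is used in the truncation step \eqref{eq: neighbours in prefix} of the proof of \cref{thm: main}.)

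The fatal gap is using the $2$-linkedness count $e((k-1)er^2)^{s-1}\le \exp(3s\log r)$ in the range $\beta n/r< s\le b(n)$. There the best weight decay is $\gamma_k^{-\Theta(n)}$ (obtained, as in \cref{lem: above delta n/r}, by restricting to a subset $S'\subset S$ of order $\beta n/r$ and applying \cref{lem: expansion guarantees linear matching} to $S'$), so you would need $s\log r = o(n)$ for all $s\le b(n)$. But $r$ can be as large as $\sqrt{2n}$, so $\log r=\Theta(\log n)$, and $b(n)\log n$ is \emph{not} $o(n)$ under the sole hypothesis $b(n)=o(n)$ — take e.g.\ $b(n)=n/\log\log n$. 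The combinatorial factor then dominates and the sum diverges. The paper's \cref{lem: above delta n/r} avoids this by replacing the $2$-linkedness count with the plain binomial bound $\binom{n}{s}\le \exp(s+s\log(n/s))$, which is $\exp(o(n))$ because $b(n)\log(n/b(n))=o(n)$ whenever $b(n)=o(n)$. You would need to make the same switch at $s=\beta n/r$ to close the argument.
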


Proving \cref{lem: KP condition holds} is the main part of the paper and deferred to~\cref{sec: KP}. The proof splits into \cref{lem: S polynomial controllable,lem: below delta n/r,lem: above delta n/r}. For the remainder of the current section, we focus on how to deduce our main theorem from \cref{lem: KP condition holds} via suitable truncation of the cluster expansion.

\begin{proof}[Proof of \cref{thm: main}]
Given $\rho > 0$, let $\rho' > 0$ be chosen sufficiently small such that $e^{\rho'} \leq 1 + \rho$. Also assume without loss of generality that $b(n) \geq \frac {\beta tn}{\log_{\gamma_k} n} \geq t$ for sufficiently large $n$. Note that any polymer $S$ contained in a cluster $\Gamma \in \CM_{Z,b(n)}$ of size $\norm \Gamma \leq t$ automatically satisfies $\abs S \leq \norm \Gamma \leq t$, so the right-hand side of the formula in \cref{thm: main} does not change if we replace $\CM_{Z,t}$ by $\CM_{Z,b(n)}$. We show that for any partition class $Z \in \mathcal Z$, we have 
\begin{align}\label{eq: partition function via cluster expansion trunc}
	\log \Xi_{Z,b(n)}
	= \sum_{m = 1}^t \sum_{\substack{\Gamma \in \CM_{Z,b(n)} \\ \norm \Gamma = m}} w(\Gamma) \pm \rho' n \gamma_k^{-rt}
	\,.
\end{align}
Plugging this into \cref{lem: split into polymer models}, we obtain an upper bound of
\[
	\abs {\II(G)}
	\overset{L\ref{lem: split into polymer models}}\leq 2^{(k-1)n} \sum_{Z \in \mathcal Z} \Xi_{Z,b(n)}
	\leq 2^{(k-1)n} \cdot \sum_{Z \in \mathcal Z} \exp \left( \sum_{m = 1}^t \sum_{\substack{\Gamma \in \CM_{Z,b(n)} \\ \norm \Gamma = m}} w(\Gamma) + \rho' n \gamma_k^{-rt} \right)
	\,.
\]
Since $r \geq \frac 1t \log_{\gamma_k} n$ by \Reg{t}, the final term accounts for a multiplicative error of at most $e^{\rho'} \leq 1 + \rho$, as desired.

By considering only subsets of $G$ that have empty intersection with a fixed partition class, it is also easy to see that $\abs {\II(G)} \geq 2^{(k-1)n}$, so by the upper bound of \cref{lem: split into polymer models}, we conclude that $\sum_{Z \in \mathcal Z} \Xi_{Z,b(n)} \geq \abs {\II(G)} \cdot 2^{-(k-1)n} \geq 1$. In particular, the ($k^3 \cdot 2^{(k-\frac 32)n}$)-term in the lower bound of \cref{lem: split into polymer models} is negligible in comparison to $2^{(k-1)n} \sum_{Z \in \mathcal Z} \Xi_{Z,b(n)}$ and we obtain the desired lower bound of
\begin{align*}
	\abs {\II(G)}
	&\overset{L\ref{lem: split into polymer models}}\geq 2^{(k-1)n} \sum_{Z \in \mathcal Z} \Xi_{Z,b(n)} - k^3 \cdot 2^{(k-\frac 32)n} \\
	&\overset{\hphantom{L\ref{lem: split into polymer models}}}\geq \left( 2^{(k-1)n} - k^3 \cdot 2^{(k-\frac 32)n} \right) \sum_{Z \in \mathcal Z} \Xi_{Z,b(n)}
	\geq (1 - \rho) \cdot 2^{(k-1)n} \sum_{Z \in \mathcal Z} \Xi_{Z,b(n)}
	\,.
\end{align*}

In order to establish (\ref{eq: partition function via cluster expansion trunc}), fix $Z \in \mathcal Z$ and $S \in \PM_{Z,b(n)}$ arbitrarily. By definition, a polymer $T \in \PM_{Z,b(n)}$ is incompatible to $S$ if and only if $T$ contains a vertex $u \in S \cup N_{Z_2}(S)$. Since every $v \in S$ has at most $(k-1)r$ neighbours $x \in \overline Z$ and every such $x$ has at most $r$ neighbours in $Z$, we find that $\abs {S \cup N_{Z_2}(S)} \leq (k-1)r^2 \abs S$. Combining these observations with \cref{lem: KP condition holds}, we obtain
\begin{align*}
	\sum_{\substack{T \in \PM_{Z,b(n)} \\ T \nsim S}} w(T) \exp \big( f(T) + g(T) \big)
	&\overset{\hphantom{L\ref{lem: KP condition holds}}}\leq \sum_{u \in S \cup N_{Z_2}(S)} \sum_{\substack{T \in \PM_{Z,b(n)} \\ T \ni u}} w(T) \exp \big( f(T) + g(T) \big) \\
	&\overset{L\ref{lem: KP condition holds}}\leq \frac {\abs {S \cup N_{Z_2}(S)}}{r^3} \\
	&\overset{\hphantom{L\ref{lem: KP condition holds}}}\leq \frac {(k-1) \abs S}{r}
	= f(S)
	\,,
\end{align*}
which enables the application of \cref{thm: KP}. Therefore, the cluster expansion of $(\PM_{Z,b(n)}, \sim, w)$ converges absolutely and by (\ref{eq: cluster expansion}), we have
\begin{align}\label{eq: partition function via cluster expansion full}
	\log \Xi_{Z,b(n)}
	= \sum_{m = 1}^\infty \sum_{\substack{\Gamma \in \CM_{Z,b(n)} \\ \norm \Gamma = m}} w(\Gamma)
	\,.
\end{align}
	
We now fix an arbitrary $m \leq e^t$ and examine the behaviour of $\sum_{\Gamma \in \CM_{Z,b(n)}, \norm \Gamma = m} \abs {w(\Gamma)}$ as $n \to \infty$. Recall that by (\ref{eq: polymer}), polymers are non-empty, so $\norm \Gamma = m$ implies $\abs \Gamma \leq m$. Since there are only finitely many graphs with at most $m$ vertices, there is a constant $c_m$ not depending on $n,r$ such that $\abs {\phi(H_\Gamma)} \leq c_m$ for every $\Gamma \in \CM_{Z,b(n)}$ with $\norm \Gamma = m$.

According to (\ref{eq: cluster weights}) and (\ref{eq: polymer weights}), it remains to consider the term
\[
	\prod_{S \in \Gamma} \abs {\II(L_G(S))} \cdot 2^{-\abs {N(S)}}
	\,.
\]
Note that each polymer $S$ has order at most $\abs S \leq \norm \Gamma = m \leq e^t$ and satisfies 
\begin{align}\label{eq: indepsets in prefix}
	\abs {\II(L_G(S))} 
	\leq (2^{k-1}-1)^{\abs {E(L_G(S))}} 
	\leq (2^{k-1}-1)^{r \abs S}	
\end{align}
by regularity. Choosing $n$ and thus $r \geq \frac 1t \log_{\gamma_k} n$ by \Reg{t} sufficiently large, we can ensure $e^t \leq r$, which allows us to apply \ExpM{\alpha_{k,t}} to $S$. Moreover, we recall $\alpha_{k,t} \leq \frac {\log_2 \gamma_k}{2e^{2t}}$ by (\ref{eq: alpha0}) to observe that
\begin{align}\label{eq: neighbours in prefix}
	2^{-\abs {N(S)}}
	\overset{\mathrm{Exp_1}}\leq 2^{-(k-1-\alpha_{k,t}) r \abs S}
	\leq 2^{\alpha_{k,t} e^t r} \cdot 2^{-(k-1) r \abs S}
	\overset{(\ref{eq: alpha0})}\leq \gamma_k^{\frac r{2 e^t}} \cdot 2^{-(k-1) r \abs S}
	\,.
\end{align}

Together with $\abs \Gamma \leq \norm \Gamma \leq e^t$, we can thus bound the weight of any cluster $\Gamma \in \CM_{Z,b(n)}$ with $\norm \Gamma = m$ by
\begin{align}\label{eq: weights in prefix}
	\notag \abs {w(\Gamma)}
	&\overset{\substack{(\ref{eq: cluster weights}) \\ (\ref{eq: polymer weights})}}{\underset{\hphantom{\substack{(\ref{eq: indepsets in prefix}) \\ (\ref{eq: neighbours in prefix})}}}\leq} \abs {\phi(H_\Gamma)} \prod_{S \in \Gamma} \abs {\II(L_G(S))} \cdot 2^{-\abs {N(S)}} \\
	\notag &\overset{\substack{(\ref{eq: indepsets in prefix}) \\ (\ref{eq: neighbours in prefix})}}\leq c_m \gamma_k^{\frac {r \abs \Gamma}{2 e^t}} \prod_{S \in \Gamma} \left( \frac {2^{k-1} - 1}{2^{k-1}} \right)^{r \abs S} \\
	&\overset{\hphantom{\substack{(\ref{eq: indepsets in prefix}) \\ (\ref{eq: neighbours in prefix})}}}\leq c_m \gamma_k^{\frac r2 - rm}
	\,.
\end{align}

It remains to bound the number of such clusters. For this, we first observe that by connectedness of $H_\Gamma$, the set $V(\Gamma) \coloneqq \bigcup_{S \in \Gamma} S$ must be $2$-linked. Moreover, it contains at most $\norm \Gamma = m$ vertices. We can specify all such sets by first picking a number $\ell \leq m$, then a vertex $u \in Z$, and then one of the according to \cref{lem: 2-linked fixed size} at most $e((k-1)er^2)^{\ell-1} \leq e((k-1)er^2)^{m-1}$ $2$-linked subsets of $Z$ of order $\ell$ that contain $u$. This shows that there are at most $emn((k-1)er^2)^{m-1}$ candidates for $V(\Gamma)$. Fixing one of these candidates, we pick another number $\ell' \leq m$, as well as $\ell'$ arbitrary subsets of $V(\Gamma)$ to be the polymers in $\Gamma$. Since $\abs {V(\Gamma)} \leq m$, we have $m(2^m)^{\ell'} \leq m \cdot 2^{m^2}$ possibilities in this step. In total, we find that there is a constant $c'_m$ not depending on $n,r$ such that
\begin{align}\label{eq: clusters in prefix}
	\abs {\{ \Gamma \in \CM_{Z,b(n)} \colon \norm \Gamma = m \}} 
	\leq c'_mnr^{2(m-1)}
	\,.
\end{align}

Setting $C_m \coloneqq c_m c'_m$ and combining (\ref{eq: weights in prefix}) with (\ref{eq: clusters in prefix}), we observe that for every $m \leq e^t$, we have
\begin{align}\label{eq: contribution prefix}
	\sum_{\substack{\Gamma \in \CM_{Z,b(n)} \\ \norm \Gamma = m}} \abs {w(\Gamma)}
	\leq C_m nr^{2(m-1)} \gamma_k^{\frac r2 - rm}
	\,.
\end{align}
Now let $C \coloneqq e^t \max \{ C_m \colon t+1 \leq m \leq e^t\}$ and note that $n$ and thus also $r \geq \frac 1t \log_{\gamma_k} n$ can be chosen sufficiently large. Since $\gamma_k > 1$, we can therefore guarantee that $r^{2(m-1)} \leq r^{2(e^t-1)} \leq \frac {\rho'}{2C} \gamma_k^{\frac r2}$ for every $m \leq e^t$. In particular, we observe that
\begin{align}\label{eq: finite prefix of sum negligible}
	\sum_{m = t+1}^{e^t} \sum_{\substack{\Gamma \in \CM_{Z,b(n)} \\ \norm \Gamma = m}} \abs {w(\Gamma)}
	\overset {(\ref{eq: contribution prefix})}\leq n\gamma_k^{\frac r2} \sum_{m = t+1}^{e^t} C_m r^{2(m-1)} \gamma_k^{-rm}
	\leq \frac {\rho'}{2C} n\gamma_k^r \sum_{m = t+1}^{e^t} C_m \gamma_k^{-r(t+1)}
	\leq \frac {\rho'} 2 n \gamma_k^{-rt}
	\,.
\end{align}
	
Finally, we direct our attention to clusters $\Gamma \in \CM_{Z,b(n)}$ with $\norm \Gamma > e^t$. Since every cluster contains some vertex $v$ and is in particular incompatible to the polymer $\{ v \}$, we can easily bound
\begin{align}\label{eq: tail setup}
	\sum_{\substack{\Gamma \in \CM_{Z,b(n)} \\ \norm \Gamma > e^t}} \abs {w(\Gamma)}
	\leq \sum_{v \in Z} \sum_{\substack{\Gamma \in \CM_{Z,b(n)} \\ \norm \Gamma > e^t \\\Gamma \ni v}} \abs {w(\Gamma)}
	\leq \sum_{v \in Z} \sum_{\substack{\Gamma \in \CM_{Z,b(n)} \\ \norm \Gamma > e^t \\\Gamma \nsim \{ v \}}} \abs {w(\Gamma)}
	\,.
\end{align}
	
In order to show that this contribution is also negligible, we use the tail bound provided by \cref{thm: KP} with $S \coloneqq \{ v \}$. Again choosing $n$ and thus $r \geq \frac 1t \log_{\gamma_k} n$ sufficiently large, this guarantees that
\begin{align}\label{eq: tail from KP}
	\sum_{v \in Z} \sum_{\substack{\Gamma \in \CM_{Z,b(n)} \\ \norm \Gamma > e^t \\\Gamma \nsim \{ v \}}} \abs {w(\Gamma)} \exp \left( \sum_{T \in \Gamma} g(T) \right)
	\leq \abs Z f(\{ v \})
	= n \cdot \frac {k-1}{r}
	\leq \frac {\rho'} 2 n
	\,,
\end{align}
where $g(T) = \log \gamma_k \cdot r \log (2 \abs T)$ by (\ref{eq: g}). Before we can plug (\ref{eq: tail from KP}) into (\ref{eq: tail setup}), we have to examine the exponential term. We observe that since polymers $T \in \Gamma$ are non-empty by (\ref{eq: polymer}), we have
\begin{align}\label{eq: estimate log in g}
	\sum_{T \in \Gamma} \log (2 \abs T)
	= \log \left( \prod_{T \in \Gamma} 2 \abs T \right)
	\geq \log \left( 2^{\abs \Gamma} \max_{T \in \Gamma} \abs T \right)
	\geq \log \left( \abs \Gamma \max_{T \in \Gamma} \abs T \right)
	\geq \log \norm \Gamma
	\,.
\end{align}
Since $\norm \Gamma > e^t$, this implies
\begin{align}\label{eq: estimate g-term in KP output}
	\exp \left( \sum_{T \in \Gamma} g(T) \right)
	\overset{(\ref{eq: g})}= \gamma_k^{r \sum_{T \in \Gamma} \log (2 \abs T)}
	\overset{(\ref{eq: estimate log in g})}\geq \gamma_k^{r \log \norm \Gamma}
	\geq \gamma_k^{rt}
	\,.
\end{align}
Using (\ref{eq: tail from KP}), we obtain
\begin{align}\label{eq: tail from KP modified}
	\gamma_k^{rt} \sum_{v \in Z} \sum_{\substack{\Gamma \in \CM_{Z,b(n)} \\ \norm \Gamma > e^t \\\Gamma \nsim \{ v \}}} \abs {w(\Gamma)}
	\overset{(\ref{eq: estimate g-term in KP output})}\leq \sum_{v \in Z} \sum_{\substack{\Gamma \in \CM_{Z,b(n)} \\ \norm \Gamma > e^t \\\Gamma \nsim \{ v \}}} \abs {w(\Gamma)} \exp \left( \sum_{T \in \Gamma} g(T) \right)
	\overset{(\ref{eq: tail from KP})}\leq \frac {\rho'} 2 n
	\,.
\end{align}
After division by $\gamma_k^{rt}$, this can be plugged into (\ref{eq: tail setup}) to yield
\begin{align}\label{eq: tail negligible}
	\sum_{\substack{\Gamma \in \CM_{Z,b(n)} \\ \norm \Gamma > e^t}} \abs {w(\Gamma)}
	\overset{(\ref{eq: tail setup})}\leq \sum_{v \in Z} \sum_{\substack{\Gamma \in \CM_{Z,b(n)} \\ \norm \Gamma > e^t \\\Gamma \nsim \{ v \}}} \abs {w(\Gamma)}
	\overset{(\ref{eq: tail from KP modified})}\leq \frac {\rho'} 2 n \gamma_k^{-rt}
	\,.
\end{align}
	
Combining (\ref{eq: partition function via cluster expansion full}), (\ref{eq: finite prefix of sum negligible}), and (\ref{eq: tail negligible}), we obtain
\begin{align*}
	\log \Xi_{Z,b(n)}
	\overset{(\ref{eq: partition function via cluster expansion full})}{\underset{\hphantom{\substack{(\ref{eq: finite prefix of sum negligible}) \\ (\ref{eq: tail negligible})}}}=} &\sum_{m = 1}^t \sum_{\substack{\Gamma \in \CM_{Z,b(n)} \\ \norm \Gamma = m}} w(\Gamma) \pm \sum_{m = t+1}^{e^t} \sum_{\substack{\Gamma \in \CM_{Z,b(n)} \\ \norm \Gamma = m}} \abs {w(\Gamma)} \pm \sum_{m > e^t} \sum_{\substack{\Gamma \in \CM_{Z,b(n)} \\ \norm \Gamma = m}} \abs {w(\Gamma)} \\
	\overset{\substack{(\ref{eq: finite prefix of sum negligible}) \\ (\ref{eq: tail negligible})}}= &\sum_{m = 1}^t \sum_{\substack{\Gamma \in \CM_{Z,b(n)} \\ \norm \Gamma = m}} w(\Gamma) \pm \rho' n \gamma_k^{-rt}
	\,.
\end{align*}
This establishes (\ref{eq: partition function via cluster expansion trunc}) and, as initially explained, finishes the proof.
\end{proof}

\section{Verification of the Kotecký-Preiss condition}\label{sec: KP}
We still have to prove \cref{lem: KP condition holds}, which we split into \cref{lem: S polynomial controllable,lem: below delta n/r,lem: above delta n/r}. The first one only considers polymers $S \in \PM_{Z,b(n)}$ with $\abs S \leq r$. In this regime, their $2$-linkedness and strong expansion by \ExpM{\alpha_{k,t}} are sufficient to limit their contribution.

\begin{lemma}\label{lem: S polynomial controllable}
Let $k \geq 3$. Furthermore, let $b(n)$ be a function with $b(n) = o(n)$. Then there is $n_0 \in \NN$ such that the following holds for every $n \geq n_0$:

Let $G$ be an $r$-regular $k$-partite $k$-graph with vertex partition $\mathcal Z$, where each part has order $n$. If $G$ satisfies \Reg{t} and \ExpM{\alpha_{k,t}}, then for every $u \in Z \in \mathcal Z$,
\[
	\sum_{s = 1}^r \sum_{\substack{S \in \PM_{Z,b(n)} \\ S \ni u \\ \abs S = s}} w(S) \exp \left( \frac {(k-1) s}r + \log \gamma_k \cdot r \log (2s) \right)
	\leq \frac 1{3r^3}
	\,.
\]
\end{lemma}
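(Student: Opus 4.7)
The strategy is to bound each summand by combining an \ExpM{\alpha_{k,t}}-based estimate on $w(S)$, the observation that $s \le r$ tames the exponential factor, and \cref{lem: 2-linked fixed size} to count $2$-linked sets, and then to sum a geometric series in $s$.

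For a polymer $S$ with $\abs S = s \le r$, $r$-regularity gives $\abs{E(L_G(S))} \le rs$, hence $\abs{\II(L_G(S))} \le (2^{k-1}-1)^{rs}$, while \ExpM{\alpha_{k,t}} (applicable because $s \le r$) gives $\abs{N_G(S)} \ge (k-1-\alpha_{k,t})rs$. Multiplying yields $w(S) \le \gamma_k^{-rs}\cdot 2^{\alpha_{k,t}rs}$. By (\ref{eq: alpha0}), $\alpha_{k,t}\log 2 \le (\log\gamma_k)/(2e^{2t})$, so $2^{\alpha_{k,t}} \le \gamma_k^{1/(2e^{2t})}$ and hence $w(S) \le \gamma_k^{-crs}$ with $c := 1 - \frac{1}{2e^{2t}}$. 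Since $s \le r$, also $\exp((k-1)s/r) \le e^{k-1}$, and $\exp(\log\gamma_k\cdot r\log(2s)) = \gamma_k^{r\log(2s)}$. Combined with the count $e((k-1)er^2)^{s-1}$ from \cref{lem: 2-linked fixed size}, the total contribution of size $s$ is at most
\[
e^k\bigl((k-1)er^2\bigr)^{s-1}\gamma_k^{-r(cs - \log(2s))}.
\]

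Set $\psi(s) := cs - \log(2s)$. The crucial inequality $c > \log 2$ (which for $t \ge 1$ follows from $c \ge 1 - 1/(2e^2) \approx 0.93 > 0.70$) implies $\psi(s+1) - \psi(s) = c - \log(1 + 1/s) \ge c - \log 2 > 0$ for all $s \ge 1$, hence $\psi(s) \ge s(c - \log 2)$. Therefore the ratio of consecutive upper bounds above is at most $(k-1)er^2\gamma_k^{-r(c-\log 2)}$. Since \Reg{t} forces $r \to \infty$ with $n$, exponential decay overtakes the $r^2$ factor and this ratio falls below $\frac{1}{2}$ for sufficiently large $n$, so the sum is at most $2e^k\gamma_k^{-r(c-\log 2)}$. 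The same exponential-beats-polynomial comparison turns the target bound $\frac{1}{3r^3}$ into $6e^k r^3\gamma_k^{-r(c-\log 2)} \le 1$, which again holds for large $n$.

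The main delicate point is securing $c - \log 2 > 0$, i.e.\ ensuring that even after paying the $\gamma_k^{r\log(2s)}$ penalty contributed by $g(S)$, each polymer vertex still yields a strictly positive net factor. This is exactly the role of the first term $\frac{\log_2\gamma_k}{2e^{2t}}$ in the definition (\ref{eq: alpha0}) of $\alpha_{k,t}$: one checks $\frac{\alpha_{k,t}\log 2}{\log\gamma_k} \le \frac{1}{2e^{2t}} \le \frac{1}{2e^2} < 1 - \log 2$, providing slack uniform in $t$.
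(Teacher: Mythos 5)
Your proof is correct, but it takes a genuinely different route from the paper's. The paper reorganises the sum by the neighbourhood size $h = \abs{N(S)}$: after bounding $w(S) \le (2^{k-1}-1)^{rs} 2^{-h}$ and the exponential term, it uses $rs \le h/(k-1-\alpha_{k,t})$ (from \ExpM{\alpha_{k,t}}) together with $\log(2s) \le s\log 2$ to convert everything into a single factor $\exp(h \cdot (\text{const}))$, then sums over $h \ge r$. To conclude that the constant is negative, it needs the \emph{second} branch of the minimum in (\ref{eq: alpha0}), namely $\alpha_{k,t} < \tfrac{(k-1)(1-\log 2)\log\gamma_k}{\log(2^{k-1}-1)+\log\gamma_k}$, and verifies via a separate identity (using $\gamma_k = 2^{k-1}/(2^{k-1}-1)$) that at this threshold the constant vanishes. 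You instead keep everything indexed by $s$, absorb the \ExpM{\alpha_{k,t}} loss into the single exponent $c = 1 - 1/(2e^{2t})$ using only the \emph{first} branch of the minimum, and close the argument with a clean ratio test on $\psi(s) = cs - \log(2s)$ (which does the same job as $\log(2s)\le s\log 2$ but more transparently). Both arguments are valid; yours is arguably tidier since it avoids the ``short calculation shows the right-hand side evaluates to $0$'' step, and it incidentally reveals that this particular lemma only relies on the $e^{2t}$-constraint on $\alpha_{k,t}$ rather than the $\log(2^{k-1}-1)$-constraint (the latter still being needed elsewhere in the paper's definition would merit a remark if this proof were adopted).
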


\begin{proof}
Let $1 \leq s \leq r$ and $S \in \PM_{Z,b(n)}$ with $\abs S = s$ be arbitrary. Recall that (\ref{eq: polymer weights}) defines polymer weights as $w(S) = \abs {\II(L_G(S))} \cdot 2^{-\abs {N(S)}}$. Since $G$ is $r$-regular, the first factor of this can be trivially bounded by
\begin{align}\label{eq: indepset link graph trivial bound}
	\abs {\II(L_G(S))}
	\leq (2^{k-1} - 1)^{\abs {E(L_G(S))}}
	\leq (2^{k-1} - 1)^{rs}
	\,.
\end{align}
Setting $h \coloneqq \abs {N(S)}$, we note that \ExpM{\alpha_{k,t}} guarantees that $rs \leq \frac {h}{k-1-\alpha_{k,t}}$. In total, we obtain
\begin{align}\label{eq: polymer weight trivial bound}
	w(S)
	\overset{\substack{(\ref{eq: polymer weights}) \\ (\ref{eq: indepset link graph trivial bound})}}\leq (2^{k-1} - 1)^{rs} \cdot 2^{-h}
	\overset{\mathrm{Exp_1}}\leq \exp \left( h \cdot \left( \frac {\log (2^{k-1} - 1)}{k-1-\alpha_{k,t}} - \log 2 \right) \right)
	\,.
\end{align}
In order to bound the exponential term in the statement of \cref{lem: S polynomial controllable}, we observe that $s \leq r$ by assumption and $\log (2s) \leq s \log 2$ holds for any $s \in \NN$. Also, we again use that $rs \leq \frac h{k-1-\alpha_{k,t}}$ by \ExpM{\alpha_{k,t}}. Altogether, we obtain
\begin{align}\label{eq: exponential terms bound}
	\exp \left( \frac {(k-1)s}r + \log \gamma_k \cdot r \log(2s) \right)
	\overset{\mathrm{Exp_1}}\leq e^{k-1} \exp \left( h \cdot \frac {\log 2 \cdot \log \gamma_k}{k-1-\alpha_{k,t}} \right)
	\,.
\end{align}
We now split the sum in the statement of \cref{lem: S polynomial controllable} according to $h = \abs {N(S)} \geq r$ and apply (\ref{eq: polymer weight trivial bound}) as well as (\ref{eq: exponential terms bound}) to obtain
\begin{align}\label{eq: weights and exponential terms bound}
	\notag &\sum_{\substack{S \in \PM_{Z,b(n)} \\ S \ni u \\ \abs S = s}} w(S) \exp \left( \frac {(k-1) s}r + \log \gamma_k \cdot r \log (2s) \right) \\
	&\qquad \overset{\substack{(\ref{eq: polymer weight trivial bound}) \\ (\ref{eq: exponential terms bound})}}\leq e^{k-1} \sum_{h = r}^\infty \sum_{\substack{S \in \PM_{Z,b(n)} \\ S \ni u \\ \abs S = s \\ \abs {N(S)} = h}} \exp \left( h \cdot \left( \frac {\log (2^{k-1} - 1) + \log 2 \cdot \log \gamma_k}{k-1-\alpha_{k,t}} - \log 2 \right) \right)
	\,.
\end{align}

Next, we bound the number of $S \in \PM_{Z,b(n)}$ with $S \ni u$ and $\abs S = s$. Since we are only interested in an upper bound, we may ignore the additional restriction of $\abs {N(S)} = h$. According to \cref{lem: 2-linked fixed size}, there are at most $e((k-1)er^2)^{s-1}$ such polymers $S$. Choosing $n$ sufficiently large and recalling that $r \geq \frac 1t \log_{\gamma_k} n$ holds by \Reg{t}, we observe that
\begin{align}\label{eq: number of polymers}
	\abs {\{ S \in \PM_{Z,b(n)} \colon \text{$S \ni u$ and $\abs S = s$} \}}
	\leq e((k-1)er^2)^{s-1}
	\leq r^{3s}
	= \exp \left( 3s \log r \right)
	\,.
\end{align}
Applying $s \leq \frac h{(k-1-\alpha_{k,t})r}$ (guaranteed by \ExpM{\alpha_{k,t}}) to this leads to
\begin{align}\label{eq: inner sum}
	\notag &\sum_{\substack{S \in \PM_{Z,b(n)} \\ S \ni u \\ \abs S = s \\ \abs {N(S)} = h}} \exp \left( h \cdot \left( \frac {\log (2^{k-1} - 1) + \log 2 \cdot \log \gamma_k}{k-1-\alpha_{k,t}} - \log 2 \right) \right) \\
	&\qquad \overset{\substack{(\ref{eq: number of polymers}) \\ \mathrm{Exp_1}}}\leq \exp \left( h \cdot \left( \frac {3 \log r}{(k-1-\alpha_{k,t})r} + \frac {\log (2^{k-1} - 1) + \log 2 \cdot \log \gamma_k}{k-1-\alpha_{k,t}} - \log 2 \right) \right)
	\,.
\end{align}

We now note that by choosing $n$ and thus $r$ sufficiently large, the $\frac {\log r}r$-term in (\ref{eq: inner sum}) becomes negligible. Since $\alpha_{k,t} < \frac {(k-1)(1 - \log 2) \log \gamma_k} {\log (2^{k-1} - 1) + \log \gamma_k}$ by (\ref{eq: alpha0}), we obtain
\begin{align}\label{eq: exponent is negative}
	\frac {3 \log r}{(k-1-\alpha_{k,t})r} + \frac {\log (2^{k-1} - 1) + \log 2 \cdot \log \gamma_k}{k-1-\alpha_{k,t}} - \log 2
	< \frac {\log (2^{k-1} - 1) + \log 2 \cdot \log \gamma_k}{k-1-\frac {(k-1)(1 - \log 2) \log \gamma_k} {\log (2^{k-1} - 1) + \log \gamma_k}} - \log 2
	\,.
\end{align}
Using $\gamma_k = \frac {2^{k-1}}{2^{k-1}-1}$, a short calculation shows that the right-hand side of (\ref{eq: exponent is negative}) evaluates to $0$. Since (\ref{eq: exponent is negative}) is a strict inequality, we conclude that there must be some $q > 0$ not depending on $n,r$ such that
\begin{align*}
	\notag &\sum_{s = 1}^r \sum_{\substack{S \in \PM_{Z,b(n)} \\ S \ni u \\ \abs S = s}} w(S) \exp \left( \frac {(k-1) s}r + \log \gamma_k \cdot r \log (2s) \right) \\
	&\qquad \overset{(\ref{eq: weights and exponential terms bound})}\leq e^{k-1} \sum_{s = 1}^r \sum_{h = r}^\infty \sum_{\substack{S \in \PM_{Z,b(n)} \\ S \ni u \\ \abs S = s \\ \abs {N(S)} = h}} \exp \left( h \cdot \left( \frac {\log (2^{k-1} - 1) + \log 2 \cdot \log \gamma_k}{k-1-\alpha_{k,t}} - \log 2 \right) \right) \\
	&\qquad \overset{(\ref{eq: inner sum})}\leq re^{k-1} \sum_{h = r}^\infty \exp \left( h \cdot \left( \frac {3 \log r}{(k-1-\alpha_{k,t})r} + \frac {\log (2^{k-1} - 1) + \log 2 \cdot \log \gamma_k}{k-1-\alpha_{k,t}} - \log 2 \right) \right) \\
	&\qquad \overset{\hphantom{(\ref{eq: inner sum})}}\leq re^{k-1} \sum_{h = r}^\infty e^{-qh} \\
	&\qquad \overset{\hphantom{(\ref{eq: inner sum})}}= re^{k-1} \frac {e^{-qr}}{1-e^{-q}}
	\,.
\end{align*}
In particular, multiplying by $3r^3$ yields $3r^4e^{k-1} \frac {e^{-qr}} {1-e^{-q}}$, which tends to $0$ as $n \to \infty$ since $q$ is positive and $r$ can be chosen sufficiently large. This finishes the proof.
\end{proof}

For larger polymers $S \in \PM_{Z,b(n)}$, we will try to guarantee a large matching in the $(k-1)$-partite $(k-1)$-graph $L_G(S)$. For a partition class $Z$ of a $k$-partite $k$-graph $G$, $b \in \NN$, and $S \in \PM_{Z,b}$, we let $m(S)$ be the number of edges in a maximum matching of $L_G(S)$. Additionally for $s \in \NN$, let
\begin{align}\label{eq: matching number}
	m_{Z,b}(s) 
	\coloneqq \min_{\substack{S \in \PM_{Z,b} \\ \abs S = s}} m(S)
	\,.
\end{align}

\begin{lemma} \label{lem: expansion guarantees linear matching}
Let $k \geq 3$, $\beta > 0$, and $G$ be an $r$-regular $k$-partite $k$-graph. Then for every partition class $Z$ and every subset $S \subset Z$ with $\abs {N(S)} \geq (k-2 + \beta)r \abs S$, we have $m(S) \geq \frac \beta{k-1} r \abs S$.
\end{lemma}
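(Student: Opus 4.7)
The plan is to reduce the lower bound on $m(S)$ to a standard vertex-cover argument inside the $(k-1)$-uniform link graph $L_G(S)$, and to extract the extra factor of $r$ from a simple incidence count powered by the very definition of $N_G(S)$ together with the assumed expansion.

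First, I would fix a maximum matching $M$ of $L_G(S)$, write $m := m(S) = \abs{M}$, and observe that $V(M)$ is automatically a vertex cover of $L_G(S)$ by maximality (any edge disjoint from $V(M)$ could be added to $M$). Setting $U := N_G(S) \setminus V(M)$, which has exactly $\abs{N_G(S)} - (k-1)m$ elements, the main double counting runs as follows: since every edge $e \in E(L_G(S))$ meets $V(M)$, we have $\abs{e \cap U} \leq k-2$, so
\[
    \sum_{u \in U} d_{L_G(S)}(u) \;=\; \sum_{e \in E(L_G(S))} \abs{e \cap U} \;\leq\; (k-2)\,\abs{E(L_G(S))} \;\leq\; (k-2)\,r\abs{S},
\]
where the last inequality uses that $S$ lies in a single partition class of a $k$-partite hypergraph, so every edge of $G$ meets $S$ in at most one vertex, and $r$-regularity then bounds the number of edges of $G$ meeting $S$ by $r\abs{S}$.

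The complementary bound is that $d_{L_G(S)}(u) \geq 1$ for every $u \in U \subset N_G(S)$, which is immediate from the definition of $N_G(S)$: any vertex in $N_G(S)$ lies in some edge of $G$ meeting $S$, hence in some edge of $L_G(S)$. This forces $\abs{U} \leq (k-2)\,r\abs{S}$. Plugging in $\abs{U} = \abs{N_G(S)} - (k-1)m$ together with the hypothesis $\abs{N_G(S)} \geq (k-2+\beta)\,r\abs{S}$, I would obtain
\[
    (k-2+\beta)\,r\abs{S} - (k-1)m \;\leq\; (k-2)\,r\abs{S},
\]
which rearranges precisely to $m \geq \tfrac{\beta}{k-1}\,r\abs{S}$.

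I do not anticipate any real obstacle here; the only point worth being careful about is the possible collapse of multiple edges of $G$ with the same restriction outside $S$ into a single edge of $L_G(S)$, but this only makes $\abs{E(L_G(S))}$ smaller and so helps the inequality rather than hurting it. Conceptually the expansion hypothesis enters exactly where one would hope: a large $\abs{N_G(S)}$ forces a large uncovered set $U$, while each uncovered vertex must be hit by at least one edge and each edge can contribute to at most $k-2$ such hits, so $U$ cannot be too large either, forcing $V(M)$, and hence $m$, to be correspondingly big.
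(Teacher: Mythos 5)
Your proof is correct and takes essentially the same approach as the paper: fix a maximum matching $M$, note $V(M)$ is a vertex cover so every edge of $L_G(S)$ has at most $k-2$ vertices outside $V(M)$, and combine $\abs{E(L_G(S))} \leq r\abs S$ with the expansion hypothesis. The only (cosmetic) difference is that you work directly with $\abs U \leq (k-2)r\abs S$ and rearrange, whereas the paper divides through by $\abs{N(S)}$ before rearranging; your final manipulation is arguably a bit cleaner.
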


\begin{proof}
Fix any matching $M$ of size $m(S)$ in $L_G(S)$. By maximality, each of the at most $r \abs S$ edges in $L_G(S)$ must intersect $M$ and therefore contains at most $k-2$ vertices from $N(S) \setminus V(M)$. Since every vertex in $N(S)$ is contained in at least one edge in $L_G(S)$ by definition, we find
\begin{align}\label{eq: edges by matching approach}
	r \abs S
	\geq \abs {E(L_G(S))}
	\geq \frac {\abs {N(S) \setminus V(M)}}{k-2}
	= \left( \frac 1{k-2} - \frac {(k-1)m(S)}{(k-2)\abs {N(S)}} \right) \abs {N(S)}
	\,.
\end{align}
Plugging this into our assumption of $\abs {N(S)} \geq (k-2 + \beta)r \abs S$ and dividing by $\abs {N(S)}$, we obtain
\begin{align*}
	1
	\geq (k-2 + \beta) \frac {r \abs S}{\abs {N(S)}}
	&\overset{(\ref{eq: edges by matching approach})}\geq (k-2 + \beta) \left( \frac 1{k-2} - \frac {(k-1)m(S)}{(k-2)\abs {N(S)}} \right) \\
	&\overset{\hphantom{(\ref{eq: edges by matching approach})}}\geq 1 + \frac \beta{k-2} - \frac {(k-1)m(S)}{(k-2)r \abs S}
	\,,
\end{align*}
where the last step uses $\abs {N(S)} \geq (k-2 + \beta)r \abs S$ again. Reordering yields $m(S) \geq \frac \beta{k-1} r \abs S$, as desired.
\end{proof}

Guaranteeing a large matching helps obtaining a bound by the following argument.

\begin{lemma}\label{lem: matching approach}
Let $k \geq 3$, $b \in \NN$, and $G$ be an $r$-regular $k$-partite $k$-graph. Then for every partition class $Z$ and every $S \in \PM_{Z,b}$, we have $w(S) \leq \gamma_k^{-m_{Z,b}(\abs S)}$.
\end{lemma}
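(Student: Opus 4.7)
The plan is to pass to a maximum matching in $L_G(S)$ and count independent sets by restricting attention to the vertices covered by this matching.

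Fix a polymer $S \in \PM_{Z,b}$ and let $M$ be a matching in $L_G(S)$ of size $m(S)$. Since the edges of $M$ are pairwise disjoint and each has size $k-1$, the matching covers exactly $(k-1)m(S)$ vertices of $N_G(S)$, while the remaining $\abs{N_G(S)} - (k-1)m(S)$ vertices are uncovered. Any independent set $I \in \II(L_G(S))$ is, in particular, a subset $T \subset N_G(S)$ that fails to fully contain each edge of $M$. I would bound $\abs{\II(L_G(S))}$ from above by the number of such subsets $T$. For each edge $e \in M$, the restriction $T \cap e$ must avoid being all of $e$, giving at most $2^{k-1} - 1$ possibilities; the restriction of $T$ to the uncovered vertices is entirely unconstrained, contributing a factor of $2^{\abs{N_G(S)} - (k-1)m(S)}$. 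Multiplying these independent contributions (they are independent because the edges of $M$ are vertex-disjoint and disjoint from the uncovered vertices) yields
\[
	\abs{\II(L_G(S))}
	\leq (2^{k-1} - 1)^{m(S)} \cdot 2^{\abs{N_G(S)} - (k-1)m(S)}
	\,.
\]

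Combining this with the definition (\ref{eq: polymer weights}) of the polymer weights, the factor of $2^{\abs{N_G(S)}}$ cancels and we are left with
\[
	w(S)
	\leq \left( \frac{2^{k-1} - 1}{2^{k-1}} \right)^{m(S)}
	= \gamma_k^{-m(S)}
	\,.
\]
Finally, since $\gamma_k > 1$ and the definition (\ref{eq: matching number}) of $m_{Z,b}$ gives $m(S) \geq m_{Z,b}(\abs S)$, we conclude $w(S) \leq \gamma_k^{-m_{Z,b}(\abs S)}$, as desired.

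There is no real obstacle here; the only subtlety is recognising that one need not count independent sets in $L_G(S)$ exactly but merely to impose, edge by edge, the much weaker constraint of avoiding the all-ones pattern on a fixed matching. Because the matching is vertex-disjoint, this constraint factorises cleanly, which is precisely what produces the $\gamma_k^{-m(S)}$-bound.
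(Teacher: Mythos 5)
Your proof is correct and takes essentially the same route as the paper: you bound $\abs{\II(L_G(S))}$ by relaxing to the constraint imposed only by a maximum matching $M$ in $L_G(S)$ (which the paper phrases as deleting all non-matching edges), use disjointness to factorize the count into $(2^{k-1}-1)^{m(S)} \cdot 2^{\abs{N_G(S)}-(k-1)m(S)}$, and then combine with the definitions of $w$ and $m_{Z,b}$ exactly as the paper does.
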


\begin{proof}
Recall that deleting edges from a hypergraph does not decrease the number of independent sets. Fixing any maximum matching of $L_G(S)$ and deleting all edges not contained in this matching thus shows that
\begin{align}\label{eq: indepsets link graph by matching approach}
	\abs {\II(L_G(S))} 
	\leq (2^{k-1}-1)^{m(S)} \cdot 2^{\abs {N(S)} - (k-1)m(S)}
	= 2^{\abs {N(S)}} \cdot \gamma_k^{-m(S)}
	\,.
\end{align}
Using the definitions of $w(S)$ and $m_{Z,b}$, we immediately obtain
\[
	w(S)
	\overset{(\ref{eq: polymer weights})}= \abs {\II(L_G(S))} \cdot 2^{- \abs {N(S)}}
	\overset{(\ref{eq: indepsets link graph by matching approach})}\leq \gamma_k^{-m(S)}
	\overset{(\ref{eq: matching number})}\leq \gamma_k^{-m_{Z,b}(\abs S)}
	\,,
\]
as desired.
\end{proof}

We now apply~\cref{lem: matching approach} to all $S \in \PM_{Z,b(n)}$ not covered by \cref{lem: S polynomial controllable}, but split this into two cases at the threshold $\abs S = \beta \frac nr$. If $\abs S$ is below this threshold, the strong expansion guaranteed by \ExpL{\beta} allows for a straightforward argument, bounding the number of such polymers with \cref{lem: 2-linked fixed size}.

\begin{lemma}\label{lem: below delta n/r}
Let $k \geq 3$ and $\beta > 0$. Furthermore, let $b(n)$ be a function with $b(n) = o(n)$. Then there is $n_0 \in \NN$ such that the following holds for every $n \geq n_0$:

Let $G$ be an $r$-regular $k$-partite $k$-graph with vertex partition $\mathcal Z$, where each part has order $n$. If $G$ satisfies \Reg{t} and \ExpL{\beta}, then for every $u \in Z \in \mathcal Z$,
\[
	\sum_{s = r}^{\beta \frac nr} \sum_{\substack{S \in \PM_{Z,b(n)} \\ S \ni u \\ \abs S = s}} w(S) \exp \left( \frac {(k-1) s}r + \log \gamma_k \cdot r \log (2s) \right)
	\leq \frac 1{3r^3}
	\,.
\]
\end{lemma}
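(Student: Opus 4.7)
The plan is to combine three ingredients: \cref{lem: matching approach} (weight bound via matchings), \cref{lem: expansion guarantees linear matching} (matchings from expansion), and \cref{lem: 2-linked fixed size} (counting $2$-linked sets). For $r \leq s \leq \beta n/r$ and any $S \in \PM_{Z,b(n)}$ with $|S| = s$, condition \ExpL{\beta} applies directly to give $|N(S)| \geq (k-2+\beta)rs$, so \cref{lem: expansion guarantees linear matching} yields $m(S) \geq \frac{\beta}{k-1}rs$, hence $m_{Z,b(n)}(s) \geq \frac{\beta}{k-1}rs$. Feeding this into \cref{lem: matching approach} gives $w(S) \leq \gamma_k^{-\frac{\beta}{k-1}rs}$. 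Moreover, by \cref{lem: 2-linked fixed size} and \Reg{t} with $n$ large, the number of $S \in \PM_{Z,b(n)}$ with $S \ni u$ and $|S| = s$ is at most $e((k-1)er^2)^{s-1} \leq r^{3s}$.

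Combining these bounds, the inner sum over $S$ with $|S| = s$ is at most
\[
\Psi(s) \coloneqq r^{3s} \cdot \gamma_k^{-\frac{\beta}{k-1}rs} \cdot \exp\!\left(\frac{(k-1)s}{r} + \log \gamma_k \cdot r \log(2s)\right).
\]
Taking logarithms,
\[
\log \Psi(s) = 3s \log r - \tfrac{\beta \log \gamma_k}{k-1}\, rs + \tfrac{(k-1)s}{r} + r \log \gamma_k \cdot \log(2s).
\]
The key point is that, for $r$ large, $\Psi$ is strictly decreasing on $[r, \infty)$: its derivative equals $3 \log r - \tfrac{\beta \log \gamma_k}{k-1} r + \tfrac{k-1}{r} + \tfrac{r \log \gamma_k}{s}$, and for $s \geq r$ the last term is bounded by $\log \gamma_k$, so the negative middle term dominates once $r$ is large enough (depending only on $k$ and $\beta$). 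Hence the maximum over $s \in [r, \beta n/r]$ is attained at $s = r$, where $\log \Psi(r) = 3r \log r - \frac{\beta \log \gamma_k}{k-1} r^2 + (k-1) + r \log \gamma_k \log(2r)$. The quadratic term $-\frac{\beta \log \gamma_k}{k-1} r^2$ absorbs all the $O(r \log r)$ terms for $r$ sufficiently large, so $\Psi(r) \leq \exp\!\left(-\tfrac{\beta \log \gamma_k}{2(k-1)} r^2\right)$.

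Summing over the at most $\beta n/r$ relevant values of $s$, the entire expression in the lemma is at most
\[
\frac{\beta n}{r} \cdot \exp\!\left(-\tfrac{\beta \log \gamma_k}{2(k-1)} r^2\right).
\]
Invoking \Reg{t} to write $r \geq \tfrac{1}{t} \log_{\gamma_k} n$, so that $r^2 \geq (\log n)^2 / (t \log \gamma_k)^2$, the exponential decays like $\exp(-c (\log n)^2)$ for a constant $c = c(k, t, \beta) > 0$, which dominates the factor $\beta n / r$ and the polynomial factor $r^3$ for $n$ large enough. Hence the whole sum is bounded by $1/(3r^3)$, as required.

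The main obstacle is verifying that the penalty term $r \log \gamma_k \log(2s)$, which grows with $s$, does not defeat the matching savings $\gamma_k^{-\frac{\beta}{k-1}rs}$ in the upper range $s$ close to $\beta n/r$. This is handled by the monotonicity argument: one checks the derivative of $\log \Psi$, which shows that the savings term $-\frac{\beta r}{k-1} \log \gamma_k$ dominates both the counting factor $3 \log r$ and the penalty slope $\frac{r \log \gamma_k}{s} \leq \log \gamma_k$ once $r$ is large, confining the entire contribution to the single worst value $s = r$.
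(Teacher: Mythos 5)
Your proof is correct and follows essentially the same route as the paper's: matching lower bound via \rf{lem: expansion guarantees linear matching}, weight bound via \rf{lem: matching approach}, polymer count via \rf{lem: 2-linked fixed size}, and then an exponential-in-$s$ bound showing the negative term $-\tfrac{\beta\log\gamma_k}{k-1}rs$ dominates for $r$ large. The only cosmetic difference is that you make the monotonicity of the per-$s$ bound explicit by differentiating $\log\Psi$, whereas the paper implicitly bounds the outer sum by $n$ times the worst-case term and closes with a slightly different final estimate (using $s \geq \tfrac{4(k-1)t}{\beta}$ and $\gamma_k^{-rt} \leq 1/n$); both close correctly, noting that the sum is empty unless $r \leq \sqrt{\beta n}$ so the factor $\beta n r^2$ is polynomial in $n$ and is beaten by $\exp(-c(\log n)^2)$.
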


\begin{proof}
Let $r \leq s \leq \beta \frac nr$ and $S \in \PM_{Z,b(n)}$ with $\abs S = s$ be arbitrary. Since $G$ is an $r$\nobreakdash-regular $k$\nobreakdash-partite $k$-graph that satisfies \ExpL{\beta}, we can apply \cref{lem: expansion guarantees linear matching} and find that $m(S) \geq \frac \beta{k-1} rs$. As $S \in \PM_{Z,b(n)}$ was arbitrary, we conclude that $m_{Z,b(n)}(s) \geq \frac \beta{k-1}rs$. Plugging this into \cref{lem: matching approach}, we can bound the weight of any such polymer $S$ by
\begin{align}\label{eq: matching approach conclusion below delta n/r}
	w(S)
	\overset{L\ref{lem: matching approach}}\leq \gamma_k^{- m_{Z,b(n)}(s)}
	\leq \exp \left( -\frac {\beta \log \gamma_k}{k-1} rs \right)
	\,.
\end{align}

Completely analogous to (\ref{eq: number of polymers}) in the proof of \cref{lem: S polynomial controllable}, we again use \cref{lem: 2-linked fixed size} to observe that
\begin{align}\tag{\ref{eq: number of polymers}}
	\abs {\{ S \in \PM_{Z,b(n)} \colon \text{$S \ni u$ and $\abs S = s$} \}}
	\leq e((k-1)er^2)^{s-1}
	\leq r^{3s}
	= \exp \left( 3s \log r \right)
	\,.
\end{align}
Altogether, this implies that
\begin{align}\label{eq: main estimate below delta n/r}
	\notag &\sum_{\substack{S \in \PM_{Z,b(n)} \\ S \ni u \\ \abs S = s}} w(S) \exp \left( \frac {(k-1) s}r + \log \gamma_k \cdot r \log (2s) \right) \\
	\notag &\qquad \overset{\substack{(\ref{eq: number of polymers}) \\ (\ref{eq: matching approach conclusion below delta n/r})}}\leq \exp \left( 3s \log r - \frac {\beta \log \gamma_k}{k-1} rs + \frac {(k-1)s}r + \log \gamma_k \cdot r \log (2s) \right) \\
	&\qquad \overset{\hphantom{\substack{(\ref{eq: number of polymers}) \\ (\ref{eq: matching approach conclusion below delta n/r})}}}= \exp \left( s \cdot \left( 3 \log r - \frac {\beta \log \gamma_k}{k-1} r + \frac {k-1}r + \log \gamma_k \cdot r \frac {\log (2s)}s \right) \right)
	\,.
\end{align}

By choosing $n$ and thus $s \geq r \geq \frac 1t \log_{\gamma_k} n$ by \Reg{t} sufficiently large, we can ensure that the negative term in (\ref{eq: main estimate below delta n/r}) dominates the rest and we conclude
\begin{align*}
	\notag &\sum_{s = r}^{\beta \frac nr} \sum_{\substack{S \in \PM_{Z,b(n)} \\ S \ni u \\ \abs S = s}} w(S) \exp \left( \frac {(k-1) s}r + \log \gamma_k \cdot r \log (2s) \right) \\
	\notag &\qquad \overset{(\ref{eq: main estimate below delta n/r})}\leq n \exp \left( s \cdot \left( 3 \log r - \frac {\beta \log \gamma_k}{k-1} r + \frac {k-1}r + \log \gamma_k \cdot r \frac {\log (2s)}s \right) \right) \\
	&\qquad \overset{\hphantom{(\ref{eq: main estimate below delta n/r})}}\leq n \exp \left( - \frac {\beta \log \gamma_k}{2(k-1)} rs \right)
	\,.
\end{align*}
Now, we can choose $n$ large enough to ensure $s \geq \frac {4(k-1)t}\beta$ and use $r \geq \frac 1t \log_{\gamma_k} n$ to observe that
\[
	n \exp \left( - \frac {\beta \log \gamma_k}{2(k-1)} rs \right)
	\leq n \gamma_k^{-2rt}
	\leq \gamma_k^{-rt}
	\leq \frac 1{3r^3}
\]
for sufficiently large $r$. This finishes the proof.
\end{proof}

Above $\abs S = \beta \frac nr$, the guaranteed expansion is significantly weaker because $N(S)$ starts to take up a non-negligible fraction of $\overline Z$. This makes the $2$-linkedness of $S$ practically irrelevant. In this regime, the standard binomial upper bound on the number of $S \subset Z$ with $\abs S = s$ is actually just better than using \cref{lem: 2-linked fixed size} and (\ref{eq: number of polymers}).

\begin{lemma}\label{lem: above delta n/r}
Let $k \geq 3$ and $\beta > 0$. Furthermore, let $b(n)$ be a function with $b(n) = o(n)$. Then there is $n_0 \in \NN$ such that the following holds for every $n \geq n_0$:

Let $G$ be an $r$-regular $k$-partite $k$-graph with vertex partition $\mathcal Z$, where each part has order $n$. If $G$ satisfies \Reg{t}, \ExpM{\alpha_{k,t}}, and \ExpL{\beta}, then for every $u \in Z \in \mathcal Z$,
\[
	\sum_{s = \beta \frac nr}^{b(n)} \sum_{\substack{S \in \PM_{Z,b(n)} \\ S \ni u \\ \abs S = s}} w(S) \exp \left( \frac {(k-1) s}r + \log \gamma_k \cdot r \log (2s) \right)
	\leq \frac 1{3r^3}
	\,.
\]
\end{lemma}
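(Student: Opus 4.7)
The plan is to combine the matching bound from \cref{lem: matching approach} with the trivial binomial count of polymers of a given size. Because $|S| \geq \beta n/r$ is too large for \ExpL{\beta} to apply to $S$ itself, I first extract a matching of size $\Omega(n)$ inside $L_G(S)$ by passing to a carefully chosen subset of $S$.

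Concretely, for any $S \in \PM_{Z,b(n)}$ with $s = |S| \geq \beta n/r$, fix an arbitrary $S' \subset S$ with $|S'| = \lfloor \beta n/r \rfloor$. Since $|S'| \leq \beta n/r$, property \ExpL{\beta} yields $|N(S')| \geq (k-2+\beta) r |S'|$, so \cref{lem: expansion guarantees linear matching} produces a matching of size at least $\frac{\beta}{k-1} r |S'| = \frac{\beta^2 n}{k-1} - O(1)$ in $L_G(S')$. Because $G$ is $k$-partite with $S', S \subset Z$, every edge $e \in E(G)$ with $e \cap S' \neq \emptyset$ satisfies $e \cap S = e \cap S'$ and hence $e \setminus S' = e \setminus S$, so $E(L_G(S')) \subset E(L_G(S))$ and any matching in $L_G(S')$ is also a matching in $L_G(S)$. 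Therefore $m(S) \geq m(S') \geq \frac{\beta^2 n}{k-1} - O(1)$, and applying \cref{lem: matching approach} produces the exponential-in-$n$ saving $w(S) \leq \gamma_k^{-\beta^2 n/(k-1) + O(1)}$.

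For the count, I bound the number of polymers $S \ni u$ with $|S| = s$ trivially by $\binom{n-1}{s-1} \leq \binom{n}{s}$, as the hint to the lemma suggests. For $s \leq b(n) = o(n)$ the standard estimate gives $\log \binom{n}{s} \leq s \log(en/s) = o(n)$. Similarly, the exponential factor $\exp\big((k-1)s/r + \log \gamma_k \cdot r \log(2s)\big)$ is of size $\exp(o(n))$ since $s \leq b(n) = o(n)$ and $r \leq \sqrt{2n}$ by \cref{lem: ExpM saves KP3}. Each summand is thus bounded by $\exp(o(n)) \cdot \gamma_k^{-\beta^2 n/(k-1) + O(1)} = \exp(-\Omega(n))$, and summing over the at most $b(n) \leq n$ values of $s$ preserves this estimate. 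The resulting bound $\exp(-\Omega(n))$ is easily $\leq 1/(3r^3)$ for sufficiently large $n$, since $1/r^3 \geq (2n)^{-3/2}$ again by \cref{lem: ExpM saves KP3}.

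The only delicate point is the matching extraction: in contrast to \cref{lem: below delta n/r}, where \ExpL{\beta} applies directly to $S$ and produces a matching linear in $s$, here we must settle for a matching that is only linear in $n$. This would be too weak if we had to absorb a count of $\binom{n}{s}$ that contained exponential-in-$n$ terms, but because $s \leq b(n) = o(n)$ forces $\log \binom{n}{s} = o(n)$, the constant-density-in-$n$ matching already suffices to dominate the entire regime $s \in [\beta n/r, b(n)]$.
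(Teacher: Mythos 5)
Your proof is correct and follows essentially the same route as the paper: pass to a subset $S' \subset S$ of order $\beta n/r$ to extract a matching of size $\Omega(n)$ via \cref{lem: expansion guarantees linear matching} and \cref{lem: matching approach}, use the trivial $\binom{n}{s}$ bound on the number of polymers (which is $\exp(o(n))$ since $s \leq b(n) = o(n)$), and absorb everything into the exponential-in-$n$ saving, using $r \leq \sqrt{2n}$ from \cref{lem: ExpM saves KP3} for the final comparison with $1/(3r^3)$. The only cosmetic difference is that you explicitly note $E(L_G(S')) \subset E(L_G(S))$ and keep the floor in $\lfloor \beta n/r \rfloor$, whereas the paper leaves those implicit under its rounding convention.
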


\begin{proof}
Let $\beta \frac nr \leq s \leq b(n)$ and $S \in \PM_{Z,b(n)}$ with $\abs S = s$ be arbitrary. Also select an arbitrary subset $S' \subset S$ of order $\abs {S'} = \beta \frac nr$ and note that any matching in $S'$ is also a matching in $S$. Since $G$ is an $r$-regular $k$-partite $k$-graph that satisfies \ExpL{\beta}, we can apply \cref{lem: expansion guarantees linear matching} to $S'$ and find that $m(S) \geq m(S') \geq \frac \beta{k-1} r \abs {S'} = \frac {\beta^2}{k-1} n$. As $S \in \PM_{Z,b(n)}$ was arbitrary, we conclude that $m_{Z,b(n)}(s) \geq \frac {\beta^2}{k-1} n$. Plugging this into \cref{lem: matching approach}, we can bound the weight of any such polymer $S$ by
\begin{align}\label{eq: matching approach conclusion above delta n/r}
	w(S)
	\overset{L\ref{lem: matching approach}}\leq \gamma_k^{- m_{Z,b(n)}(s)}
	\leq \exp \left( -\frac {\beta^2 \log \gamma_k}{k-1} n \right)
	\,.
\end{align}

As hinted at already, we can bound the number of such polymers $S$ by the trivial binomial bound of 
\begin{align}\label{eq: number of polymers above delta n/r}
	\abs {\{ S \in \PM_{Z,b(n)} \colon \text{$S \ni u$ and $\abs S = s$} \}}
	\leq \binom {n}{s}
	\leq \left( \frac {en}s \right)^s
	= \exp \left( s + s \log \left( \frac ns \right) \right)
	\,.
\end{align}
Altogether, this implies that
\begin{align}\label{eq: main estimate above delta n/r}
	\notag &\sum_{\substack{S \in \PM_{Z,b(n)} \\ S \ni u \\ \abs S = s}} w(S) \exp \left( \frac {(k-1) s}r + \log \gamma_k \cdot r \log (2s) \right) \\
	\notag &\qquad \overset{\substack{(\ref{eq: number of polymers above delta n/r}) \\ (\ref{eq: matching approach conclusion above delta n/r})}}\leq \exp \left( s + s \log \left( \frac ns \right) - \frac {\beta^2 \log \gamma_k}{k-1} n + \frac {(k-1)s}r + \log \gamma_k \cdot r \log (2s) \right) \\
	&\qquad \overset{\hphantom{(\ref{eq: matching approach conclusion above delta n/r})}}= \exp \left( s \cdot \left( 1 + \log \left( \frac ns \right) - \frac {\beta^2 \log \gamma_k \cdot n}{(k-1)s} + \frac {k-1}r + \log \gamma_k \cdot r \frac {\log (2s)}s \right) \right)
	\,.
\end{align}

Note now that $s \leq b(n) = o(n)$, so by choosing $n$ sufficiently large, we can ensure that $\frac ns$ dominates $\log \left( \frac ns \right)$. Moreover, since $G$ also satisfies $\ExpM{\alpha_{k,t}}$ with $0 < \alpha_{k,t} < 1$, \cref{lem: ExpM saves KP3} guarantees that $r \leq \sqrt {2n}$ and consequently, $\frac ns$ also dominates $\frac {r \log (2s)}s \leq \frac {\sqrt {2n} \log (2n)}s$. In total, we conclude
\begin{align*}
	\notag &\sum_{s = \beta \frac nr}^{b(n)} \sum_{\substack{S \in \PM_{Z,b(n)} \\ S \ni u \\ \abs S = s}} w(S) \exp \left( \frac {(k-1) s}r + \log \gamma_k \cdot r \log (2s) \right) \\
	\notag &\qquad \overset{(\ref{eq: main estimate above delta n/r})}\leq n \exp \left( s \cdot \left( 1 + \log \left( \frac ns \right) - \frac {\beta^2 \log \gamma_k \cdot n}{(k-1)s} + \frac {k-1}r + \log \gamma_k \cdot r \frac {\log (2s)}s \right) \right) \\
	&\qquad \overset{\hphantom{(\ref{eq: main estimate below delta n/r})}}\leq n \exp \left( - \frac {\beta^2 \log \gamma_k}{2(k-1)} n \right)
	\,.
\end{align*}
More specifically, this term tends to $0$ as $n$ increases, even when multiplied by any fixed polynomial in $n$. We can thus use $r \leq \sqrt {2n}$ by \ExpM{\alpha_{k,t}} and \cref{lem: ExpM saves KP3} again to observe that
\[
	n \exp \left( - \frac {\beta^2 \log \gamma_k}{2(k-1)} n \right)
	\leq \frac 1{3 \sqrt {8´n^3}}
	\leq \frac 1{3r^3}
\]
for sufficiently large $n$. This finishes the proof.
\end{proof}

\section{Concluding remarks}
We first remark that the assumption of $k \geq 3$ in \cref{thm: main} is owed to the fact that our approach of guaranteeing a large matching in the relevant link graph is not well-defined if $G$ itself is already a $2$-graph. In this case of $k = 2$, however, existing literature already provides a similar statement, usually phrased as the existence of an FPTAS which approximates the number of independent sets in bipartite expander graphs by truncating the cluster expansion. In Theorem~1 of~\cite{JPP23}, for example, the only requirement we need to check is that $G$ is an \emph{$\varepsilon$-expander} for some small $\varepsilon$, i.e.\ $\abs {N(S)} \geq (1 + \varepsilon)\abs S$ for all $S \subset Z \in \mathcal Z$ with $\abs S \leq \frac n2$. For $\abs S \leq \frac {\beta^2 n}{1 + \varepsilon}$, this immediately follows from applying property \ExpL{\beta} to $S$ or a subset $S' \subset S$ of order $\abs {S'} = \beta \frac nr$. For $\abs S > \frac {\beta^2 n}{1 + \varepsilon} > b(n)$, we observe that $S \cup (\overline Z \setminus N(S))$ is an independent set and must therefore satisfy $\abs {\overline Z \setminus N(S)} \leq b(n)$ by property $\Def{b(n)}$, so $\abs {N(S)} \geq n - b(n) > (1 + \varepsilon)\frac n2 \geq (1 + \varepsilon)\abs S$ by $b(n) = o(n)$.

Secondly, note that we actually obtain slightly better bounds than the ones stated in \cref{thm: main}. In fact, careful consideration of the initial steps in the proofs of \cref{lem: split into polymer models,thm: main} shows that for any $\rho > 0$, the equality can be split into
\begin{align*}
	\abs {\II(G)}
	&\geq 2^{(k-1)n} \cdot \sum_{Z \in \mathcal Z} \exp \left( \sum_{m = 1}^t \sum_{\substack{\Gamma \in \CM_{Z,t} \\ \norm \Gamma = m}} w(\Gamma) - \rho n \gamma_k^{-rt} \right) - 2^{(k-2+\rho)n} \text{ and}\\
	\abs {\II(G)}
	&\leq 2^{(k-1)n} \cdot \sum_{Z \in \mathcal Z} \exp \left( \sum_{m = 1}^t \sum_{\substack{\Gamma \in \CM_{Z,t} \\ \norm \Gamma = m}} w(\Gamma) + \rho n \gamma_k^{-rt} \right)
	\,.
\end{align*}

Several directions seem interesting to us for future research. First and foremost, our result begs the question if and by how much the conditions imposed on our hypergraphs can be relaxed, while still allowing for asymptotic enumeration of independent sets with the cluster expansion method. 

Secondly, we see no reason why this approach should be limited to the mere number of independent sets. Basically, every variation that can still express its object of interest as the partition function of a suitable polymer model should be a good candidate for obtaining a similar result. Introducing a fugacity parameter $\lambda > 0$, one could for example consider the weighted sum
\[
	\sum_{I \in \II(G)} \lambda^{\abs I}
	\,.
\]
This would include our result at $\lambda = 1$. Even more generally, one could enumerate homomorphisms from regular $k$-partite $k$-graphs $G$ into a fixed graph $H$, very much akin to the impressive study in~\cite{JK20}. In particular, this would immediately provide information about the asymptotic number of proper colourings. 

A more difficult, albeit equally interesting question is whether our result generalizes to non-partite regular $k$-graphs. Here, it seems much harder to set up a similar counting scheme by identifying defect sets. This mirrors the complications arising in the graph case when trying to drop the bipartiteness requirement.


\bibliographystyle{amsplain}
\bibliography{indepsets.bib}

\bigskip

\section*{Appendix}
We prove \cref{lem: high girth determines local structure} and thus demonstrate how having high girth influences the local structure of a linear $k$-graph. The key observation is the following technical lemma.

\begin{lemma}\label{lem: high girth technical}
Let $k, m \geq 3$, $G$ be a linear $k$-partite $k$-graph, and $v_1, \ldots, v_{(k-1)m}$ be a sequence of vertices of $G$ such that the sets $e_i \coloneqq \{ v_{(k-1)(i-1)+1}, \ldots, v_{(k-1)i+1} \}$ for $i \in \{ 1, \ldots, m \}$ are distinct edges of $G$, where subscripts are taken modulo $(k-1)m$. Then $G$ has girth at most $m$.
\end{lemma}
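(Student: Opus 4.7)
We prove the lemma by strong induction on $m \ge 3$. If the given sequence $v_1, \ldots, v_{(k-1)m}$ consists of distinct vertices, it is itself a loose $m$-cycle; the substance of the induction is to show that any vertex repetition lets us extract a strictly shorter valid pre-cycle in $G$.

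Base case $m = 3$: we argue that all $3(k-1)$ positions must carry distinct vertices. Within each edge $e_i$ the $k$ listed positions carry distinct vertices (else $|e_i| < k$), so a repetition $v_p = v_q = v$ with $p \ne q$ would place $v$ in at least two of the edges $e_1, e_2, e_3$. For $m = 3$ all pairs of these edges are cyclically consecutive, so linearity forces each such intersection to consist of the designated boundary vertex only; a brief case check then collapses either two distinct positions within one edge or two edges sharing more than one vertex, both contradictions.

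Inductive step: assume the lemma for all $3 \le m' < m$. If the pre-cycle has no vertex repetition, it is already a loose $m$-cycle. Otherwise fix a repeated vertex $v$ and let $P(v) \subseteq \{1, \ldots, (k-1)m\}$ be the set of positions at which $v$ appears. Select two cyclically consecutive elements $p_i, p_{i+1}$ of $P(v)$ in the cyclic direction yielding the shorter arc; since $|P(v)| \ge 2$, this arc has edge-length $\ell \le \lfloor (m+2)/2 \rfloor \le m - 1$ for $m \ge 4$. Now choose the endpoint edges of the arc as follows: if $p_i$ is interior to an edge $e_{\alpha_0}$, set $\alpha \coloneqq \alpha_0$; if instead $p_i$ is the boundary position between $e_{\alpha_0}, e_{\alpha_0+1}$, set $\alpha \coloneqq \alpha_0 + 1$. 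Define $\beta$ symmetrically from $p_{i+1}$. Close the arc $e_\alpha, e_{\alpha+1}, \ldots, e_\beta$ into a new pre-cycle of length $\ell$ by taking $v$ as the vertex shared between $e_\beta$ and $e_\alpha$.

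Two verifications remain. First, each edge of the arc carries $k$ distinct vertices at its $k$ new positions: any collision between the cyclic-closure slot (carrying $v$) and an original internal boundary would require $v$ to equal the vertex at some position strictly between $p_i$ and $p_{i+1}$, contradicting the no-$v$-in-between property of our choice of $p_i, p_{i+1}$. Second, $\ell \ge 3$: the case $\ell = 1$ would force $p_i, p_{i+1}$ into the same edge's position range, giving $|e_\alpha| < k$, while $\ell = 2$ would require $v$ to be a second shared vertex of two consecutive edges beyond their natural boundary, contradicting linearity. Since the arc's edges are a subset of the original distinct edges, the new pre-cycle has length $\ell \in [3, m-1]$, and the inductive hypothesis produces a loose cycle in $G$ of length at most $\ell < m$.

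The main obstacle I anticipate is the case analysis for how $p_i, p_{i+1}$ are situated (interior vs. boundary, and possibly coinciding with cyclic wrap-around positions) and verifying in each sub-case that the selected $\alpha, \beta$ yield a valid pre-cycle with length in $[3, m-1]$; linearity of $G$ together with $|e_i| = k$ is the recurring tool throughout.
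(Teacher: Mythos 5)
Your proof is correct and uses essentially the same approach as the paper: locate a close vertex repetition, excise the shorter arc to obtain a strictly shorter valid pre-cycle, and induct, with $k$-uniformity and linearity ruling out arcs of edge-length $1$ or $2$. The paper frames the choice slightly differently (it minimizes an edge-distance $d(j,j')$ globally over all repeated pairs, rather than fixing one repeated vertex $v$ and taking the shortest gap between consecutive occurrences of $v$), but the resulting case analysis, the two contradictions invoked (an edge with fewer than $k$ vertices, or two edges sharing more than one vertex), and the treatment of the $m=3$ base case are the same in substance.
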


\begin{proof}
We induct on $m$ and start with the induction step for $m \geq 4$ arbitrary. If all vertices in $C \coloneqq v_1, \ldots, v_{(k-1)m}$ are distinct, then $C$ itself constitutes a loose $m$-cycle in $G$ and there is nothing to show. We can therefore assume that there are two indices $j \neq j' \in \{ 1, \ldots, (k-1)m \}$ such that $v_j = v_{j'}$.

For any $j \in \{ 2, \ldots, (k-1)m \}$, we denote the indices of the edges that position $j$ is involved in as
\[
	I(j)
	\coloneqq \{ i \in \{ 1, \ldots, m \} \colon (k-1)(i-1)+1 \leq j \leq (k-1)i+1 \}
	\,.
\]
So $\abs {I(j)} \in \{ 1, 2 \}$ for every $j$. Naturally, we also set $I(1) \coloneqq \{ 1, m \}$. For two indices $j, j' \in \{ 1, \ldots, (k-1)m \}$, we further let their distance in $C$ be
\begin{align}\label{eq: index distance}
	d(j,j')
	\coloneqq \min_{\substack{i \in I(j) \\ i' \in I(j')}} \min \{ i' - i, i - i' \}
	\,,
\end{align}
where the differences $i'-i, i-i' \in \{ 0, \ldots, m-1 \}$ are taken modulo $m$. Out of all pairs $j \neq j' \in \{ 1, \ldots, (k-1)m \}$ with $v_j = v_{j'}$, we fix one minimizing $d(j,j')$ and let $i \in I(j), i' \in I(j')$ be the choices yielding said minimum distance in (\ref{eq: index distance}). Reversing and cyclically shifting the sequence $C$ as needed, we can assume $i' = 1 \leq i$ and $i = 1 + d(j,j')$ without loss of generality.

Next, we show that $d(j,j') \geq 2$ by contradiction. If $d(j,j') = 0$, then $i = 1$ and $e_1 \in E(G)$ would contain less than $k$ vertices. This contradicts $G$ being a $k$-graph. We conclude that $d(j,j') \geq 1$ and $i \geq 2$. In particular, this implies that $j' < k$, otherwise $i' = 2$ would yield a smaller $d(j,j')$. Similarly, $j > (k-1)(i-1)+1$, otherwise $i-1$ would yield a smaller $d(j,j')$.

If $d(j,j') = 1$, then $i = 2$ and $j > k$. Also, $v_j \neq v_k$, else $d(j,k) = 0 < 1 = d(j,j')$. However, this implies that $e_1 \cap e_2 \supseteq \{ v_j, v_k \}$, which contradicts $G$ being linear. We conclude that indeed, $d(j,j) \geq 2$ and thus, $3 \leq i \leq m-1$. 

Recall that $j' < k$ and $j > (k-1)(i-1)+1$. We now consider the following sequence:
\[
	C' 
	\coloneqq v_{j'}, v_1, \ldots, v_{j'-1}, v_{j'+1}, \ldots v_{j-1}, v_{j+1}, \ldots, v_{(k-1)i+1}
	\,.
\]
It is derived from $C$ by reordering $e_1$ such that it starts with $v_{j'}$ and reordering $e_i$ such that it ends with $v_j$, leaving out $v_j$ and everything after $e_i$. Since $v_j = v_{j'}$ and $3 \leq i \leq m-1$, we can apply the induction hypothesis to $C'$. This finishes the proof of the induction step.

It remains to prove the induction base $m = 3$. Since the proof of the induction step does not use the fact that $m \geq 4$, we can again conclude that if there is a repetition $v_j = v_{j'}$ in a sequence $C$ of $3(k-1)$ vertices, then $d(j,j') \geq 2$. However, since the sequence $C$ only consists of $m = 3$ edges of $G$, any pair of positions $j,j'$ is at distance at most $1$. This shows that there cannot be a repetition and $C$ is indeed a loose $3$-cycle.
\end{proof}

Using this, we can easily establish \cref{lem: high girth determines local structure}, which states that vertices $v, u \in Z$ that are adjacent in $Z_2$ share exactly one neighbour in $G$.

\begin{proof}[Proof of \cref{lem: high girth determines local structure}]
We first note that $N_G(\{ v \})$ and $N_G(\{ u \})$ must intersect in at least one vertex $x \in \overline Z$ since $vu \in E(Z_2)$. For a proof of $\abs {N_G(\{ v \}) \cap N_G(\{ u \})} = 1$ by contradiction, suppose that there are distinct vertices $x, x' \in N_G(\{ v \}) \cap N_G(\{ u \})$. This implies that there are edges $e_{vx} \coloneqq \{ v, y_{vx}^{(1)}, \ldots, y_{vx}^{(k-2)}, x \}$, $e_{xu} \coloneqq \{ x, y_{xu}^{(1)}, \ldots, y_{xu}^{(k-2)}, u \}$, $e_{ux'} \coloneqq \{ u, y_{ux'}^{(1)}, \ldots, y_{ux'}^{(k-2)}, x' \}$, and $e_{x'v} \coloneqq \{ x', y_{x'v}^{(1)}, \ldots, y_{x'v}^{(k-2)}, v \}$ in $G$. We now consider the sequence
\[
	C_4 \coloneqq v, y_{vx}^{(1)}, \ldots, y_{vx}^{(k-2)}, x, y_{xu}^{(1)}, \ldots, y_{xu}^{(k-2)}, u, y_{ux'}^{(1)}, \ldots, y_{ux'}^{(k-2)}, x', y_{x'v}^{(1)}, \ldots, y_{x'v}^{(k-2)}
	\,.
\]

Since $v, u$ are distinct and the only vertices of $Z$ in $C_4$, the only possibilities for identical edges in $C_4$ are $e_{vx} = e_{x'v}$ and $e_{xu} = e_{ux'}$. If both were true, both edges would contain $x, x'$, which contradicts $G$ being linear. So at most one of them is true, without loss of generality $e_{xu} = e_{ux'}$. We then replace the subsequence
\[
	x, y_{xu}^{(1)}, \ldots, y_{xu}^{(k-2)}, u, y_{ux'}^{(1)}, \ldots, y_{ux'}^{(k-2)}, x'
\]
of $C_4$ by $x, C'', x'$ with $C''$ being an arbitrary ordering of $e_{xu} \setminus \{ x, x' \}$. Applying \cref{lem: high girth technical} to either this sequence (if $e_{vx} = e_{x'v}$ or $e_{xu} = e_{ux'}$) or $C_4$ itself (if neither $e_{vx} = e_{x'v}$ nor $e_{xu} = e_{ux'}$), we find that $G$ has girth at most $4$. This, however, contradicts our assumption that $G$ has girth at least $5$ and thus finishes the proof.
\end{proof}

\end{document}